\def \IN{\mathbb N}
\def \IC{\mathbb C}
\def \ID{\mathbb D}
\numberwithin{equation}{section}
\newtheorem{theor}{Theorem}[section]
\newtheorem{cor}[theor]{Corollary}
\newtheorem{quest}[theor]{Question}
\theoremstyle{remark}
\newtheorem*{rem}{Remark}
\journalname{Potential Analysis}
\begin{document}

\title{On the harmonic measure and  capacity of rational lemniscates
\thanks{Research of second author supported by grants from NSERC and the Canada Research Chairs program.}}

\author{Stamatis Pouliasis \and Thomas Ransford}

\institute{Stamatis Pouliasis \at
D\'epartement de math\'ematiques et de statistique, Universit\'e Laval, Qu\'ebec (QC), Canada, G1V 0A6. \\
              \email{stamatispouliasis@gmail.com}            \\
             \emph{Present address:}   Faculty of Engineering and Natural Sciences, Sabanci University, Tuzla Istanbul 34956, Turkey
           \and
Thomas Ransford  \at
D\'epartement de math\'ematiques et de statistique, Universit\'e Laval, Qu\'ebec (QC), Canada, G1V 0A6. \\
              Tel.: +14186562131, ext 2738\\
              Fax: +14186562817\\
              \email{ransford@mat.ulaval.ca}
}

\date{}

\maketitle

\begin{abstract}
We study the lemniscates of rational maps. We prove a reflection principle for the harmonic measure of rational lemniscates and
we give estimates for their capacity and the capacity of their components. Also, we prove a version
of Schwarz's lemma for the capacity of the lemniscates of proper holomorphic functions.
\keywords{Rational functions \and lemniscates \and harmonic measure \and logarithmic capacity \and Lindel\"of principle.}
\subclass{30C85 \and 30C10 \and 30C80 \and 31A15}
\end{abstract}

\section{Introduction}

Let $\ID:=\{z\in\IC:|z|<1\}$ be the unit disc and let
$R$ be a rational function in the extended complex plane $\hat{\IC}$ with $R(\infty)=0$.
A set of the form
\[
\{z\in\hat{\IC}:|R(z)|=t\},
\qquad 0<t<\infty,
\]
is called a \textit{lemniscate} of $R$; we will also refer to sets of the form
\[
K_{t}:=\{z\in\hat{\IC}:|R(z)|\geq t\},
\qquad 0<t<\infty,
\]
as lemniscates of $R$. The properties of lemniscates of polynomials and rational functions have
been studied by many researchers. We mention here some recent results.

Anderson and Eiderman \cite{Anderson-Eiderman} proved that there exists an absolute constant $C>0$
such that, for the logarithmic derivative
\[
\frac{Q'_{n}(z)}{Q_{n}(z)}=\sum_{i=1}^{n}\frac{1}{z-z_{i}}
\]
of every polynomial $Q_{n}(z):=\prod_{i=1}^{n}(z-z_{i})$
of degree $n$, the inequality
\[
M\Big(\Big\{z\in\IC:\Big|\sum_{i=1}^{n}\frac{1}{z-z_{i}}\Big|>t\Big\}\Big)\leq\frac{C}{t}n\sqrt{\log n}
\]
holds, where $M$ denotes the 1-dimensional Hausdorff content.

Solynin and Williams \cite{Solynin-lemniscates} proved that, for each $n\geq1$, there exists a
constant $C(n)$ such that the inequality
\[
\frac{\lambda(\{z\in\hat{\IC}:|P(z)|\leq c\})}{\pi r^{2}(\{z\in\hat{\IC}:|P(z)|\leq c\})}\leq C(n)
\]
holds for every complex polynomial $P$ of degree $n$ and for every $c\in(0,+\infty)$, where $\lambda(E)$
is the area of $E$ and $r(E)$ is the inradius of $E$ (i.e. the supremum of the radii of open disks contained in $E$).

A map between two topological spaces $F:X\to Y$ is called \textit{proper} 
if the inverse image $F^{-1}(K)$ of every compact subset $K$ of $Y$ is a compact subset of $X$.
Dubinin \cite{Dubinin-lemniscate}, among other results, 
generalized a result of P\'olya for the area of a polynomial
lemniscate by proving the following inequality for a proper
holomorphic map $F$ from a domain $D$ onto a circular ring $\{z\in\IC:t_{1}<|z|<t_{2}\}$ ($0<t_{1}<t_{2}<+\infty$):
if $E$ is the union of all those connected components of $\hat{\IC}\setminus D$ whose boundaries contain points
corresponding, under the holomorphic function~$F$, to points on the circle $\{z\in\IC:|z|=t_{1}\}$ and $\infty\not\in E$,
then
\begin{equation}\label{Dubininlemniscateanisotita}
    \Big(\frac{t_{2}}{t_{1}}\Big)^{\frac{2}{n}}\leq\frac{\lambda(E\cup D)}{\lambda(E)}.
\end{equation}
Also, he proved that equality holds in (\ref{Dubininlemniscateanisotita}) if and only if $F(z)=c(z-a)^{n}$,
where $c$ and $a$ are arbitrary complex numbers.

Let $\Gamma$ be a $C^{\infty}$ Jordan curve in $\IC$ and let $G_{-}$ and $G_{+}$ denote
the bounded and unbounded component of $\hat{\IC}\setminus\Gamma$ respectively. From the
Riemann mapping theorem there exist conformal maps $\phi_{-}:\ID\mapsto G_{-}$ and $\phi_{+}:\hat{\IC}\setminus\overline{\ID}\mapsto G_{+}$
with $\phi_{+}(\infty)=\infty$ and $\phi'_{+}(\infty)>0$. It is well known that $\phi_{-}$, $\phi_{+}$
extend to $C^{\infty}$ diffeomorphisms on the closures of their respective domains. The map
$\phi_{+}^{-1}\circ\phi_{-}:\partial\ID\mapsto\partial\ID$ is called the \textit{fingerprint} of $\Gamma$.
Ebenfelt, Khavinson and Shapiro \cite{Ebenfelt-Khavinson-Shapiro}, among other results, proved that
the fingerprint of a polynomial lemniscate of degree $n$ is given
by the $n$-th root of a Blaschke product of degree $n$ and that conversely, any
smooth diffeomorphism induced by such a map is the fingerprint of a polynomial
lemniscate of the same degree. Younsi \cite{Younsi-fingerprints} generalized the above result to the
case of rational lemniscates.

For more results and applications of lemniscates we refer the reader to the books
\cite{Borwein-Erdelyi}, \cite{Rahman-Schmeisser} and \cite{Sheil-Small}.

The starting point of our work was a question posed by Younsi considering the capacity of the
components of the lemniscate of a good rational function. Following
\cite{Bourque-Younsi}, we will say that a rational function $R$ is $d$-\textit{good} ($d\in\IN$) if the degree of $R$ is $d$, if $R(\infty)=0$ and if the open set
$\Omega:=R^{-1}(\ID)$ is connected and bounded by $d$ disjoint analytic Jordan
curves $\gamma_{i}$, $i=1,\dots,d$.
Then $R$ has a simple pole $p_{i}$ on the bounded component of $\hat{\IC}\setminus\gamma_{i}$ for each $i$, and it can be written as
\[
R(z)=\sum_{i=1}^{d}\frac{a_{i}}{z-p_{i}},
\]
for some $a_{i}\in\IC\setminus\{0\}$, $i=1,\dots,d$.
Also, we will denote by $\zeta_{1}$,\dots,$\zeta_{d}$ the zeros of $R$ (repeated according to multiplicity).
We prove the following reflection principle for the harmonic measure of $\Omega$
and $\hat{\IC}\setminus \overline{\Omega}$: for every Borel set $E\subset\partial\Omega$,
\[
\sum_{i=1}^{d}\omega_{p_{i}}^{\hat{\IC}\setminus\overline{\Omega}}(E)=\sum_{j=1}^{d}\omega_{\zeta_{j}}^{\Omega}(E),
\]
where $\omega_{a}^{D}$ denotes the harmonic measure of an open set
$D\subset\hat{\IC}$ with respect to the point $a\in D$ (the above
equality is true for arbitrary rational functions, see Theorem
\ref{armonikometroritonsinartiseon}). Also, we show that the above
equality characterizes rational functions in the class of proper
holomorphic functions (see Theorem
\ref{xaraktirismosproperritonsinartiseonapoarmonikometro}). For the
logarithmic capacity of the component $K_{i}$ of
$K:=\hat{\IC}\setminus\Omega$ containing $p_{i}$, we give a new proof of the known result that
\[
{\rm{cap}}(K_{i})\geq|a_{i}|,\qquad i=1,\dots,d,
\]
and we show that there exists a constant $c$, depending just on the radius of injectivity of $R$
on $K_{i}$, such that
\[
{\rm{cap}}(K_{i})\leq c|a_{i}|.
\]
From \cite[Proposition 4.16, p. 114]{Tolsa-book} it follows that there exists an absolute
constant $C>0$ such that, for the lemniscate $K:=\{z\in\IC:|R(z)|\geq 1\}$ of every good rational function $R(z):=\sum_{i=1}^{d}(a_{i}/(z-p_{i}))$,
\begin{equation}\label{anisotitalemniscatecauchytransrormanalyticcapacity}
    \gamma(K)\leq C\sum_{i=1}^{d}|a_{i}|,
\end{equation}
where $\gamma$ denotes analytic capacity.
Younsi, motivated by considerations related to the semiadditivity property of analytic capacity, asked the following question:
\begin{quest}\label{malikquestion}
Given $d\geq 2$, does there exist a constant $C(d)>0$ with the following property: if $R(z):=\sum_{i=1}^{d}(a_{i}/(z-p_{i}))$
is a $d$-good rational function, then
\[
{\rm{cap}}(K_{i})\leq C(d)|a_{i}|,
\]
where $K_{i}$ is the component of the lemniscate
$K:=\{z\in\IC:|R(z)|\geq 1\}$ containing $p_{i}$?
\end{quest}
\noindent We answer negatively Younsi's question by giving examples of good rational functions of degree 3 such that the ratio ${\rm{cap}}(K_{i})/|a_{i}|$
can be arbitrarily large.
It is well known that, if $P(z):=\sum_{i=0}^{n}a_{i}z^{i}$ is a polynomial with $a_{n}\neq 0$ and $E$
is a compact subset of $\IC$,
then the logarithmic capacity of $P^{-1}(E)$ is given by
\[
{\rm{cap}}(P^{-1}(E))=\Big(\frac{{\rm{cap}}(E)}{|a_{n}|}\Big)^{\frac{1}{n}},
\]
see \cite[Theorem 5.2.5, p. 134]{Ransford}.
We give a lower estimate for the logarithmic capacity of the lemniscate $K$ of a good rational function $R$
taking into account the poles $\{p_{i}\}$ and the residues $\{a_{i}\}$ of $R$ by showing that
\[
{\rm{cap}}(K)\geq\Big[\prod_{\substack{i,j=1\\i\neq j}}^{d}|p_{i}-p_{j}|\prod_{i=1}^{d}|a_{i}|\Big]^{\frac{1}{d^{2}}},
\]
(see Theorems \ref{tiposgialogarithmikixoritikotita}, \ref{anofragmagialogarithmikixoritikotita} and
\ref{antiparadigmagiaxoritikotitasinistosas}).

Finally, if $f$ is a proper holomorphic function from a domain $\Omega$ to $\ID$ with \mbox{$f(\infty)=0$},
we prove a geometric version of Schwarz's lemma for the logarithmic capacity of the lemniscates
$K_{t}:=\hat{\IC}\setminus\{z\in\Omega:|f(z)|<t\}$
by showing that the function
\[
t\mapsto t^{\frac{1}{m(\infty)}}\cdot {\rm{cap}}(K_{t}),\qquad t\in(0,1),
\]
where $m(\infty)$ is the multiplicity of $f$ at $\infty$,
is non-decreasing and it is constant on a
neighborhood of $0$
(see Theorem \ref{Schwarzlemmaproperolomorfessinartiseis}).

\section{Notations and preliminaries}

\subsection{Good rational functions} Let $R(z):=\sum_{i=1}^{d}(a_{i}/(z-p_{i}))$ be a $d$-good rational function
and let $K_{i}$ be the component of the lemniscate $K:=\hat{\IC}\setminus\Omega$ containing $p_{i}$.
Then $R$ is injective on a neighborhood $V_{i}$ of $K_{i}$ and
we will denote by $R_{i}$ the restriction of $R$ on $V_{i}$, $i=1,\dots,d$.
We will denote by $D_{i}$ the interior of $K_{i}$.
Also, we let $Q_{i}:=1/{R_{i}}$ on $V_{i}$ and $P_{i}:=Q_{i}^{-1}$, and we note that
$P_{i}:\overline{\ID}\mapsto K_{i}$ is one to one and onto, $i=1,\dots,d$.

\subsection{Harmonic measure and logarithmic capacity} 
We will denote by $G_{D}(z,a)$, $z\in D$, and $\omega_{a}^{D}(E)$, $E\subset\partial D$,
the \textit{Green function} and the \textit{harmonic measure} of a
Greenian domain $D\subset\hat{\IC}$ with respect to the point $a\in D$. 
(Here \textit{Greenian} means simply that the domain possesses a Green function.
It is well known that a planar domain is Greenian if and only if 
its complement is of positive  logarithmic capacity.)
Also, we let $G_{D}(z,a):=0$ for $z\in\hat{\IC}\setminus D$ and
we note that, for domains $D$ that are regular for the Dirichlet
problem, $z\mapsto G_{D}(z,a)$ is a subharmonic function on
$\hat{\IC}\setminus\{a\}$.

The \textit{equilibrium energy} of a compact set $K\subset\IC$ is defined by
\[
I(K):=\inf_{\mu}\iint \log\frac{1}{|z-w|}d\mu(z)d\mu(w),
\]
where the infimum is taken over all Borel probability measures
$\mu$ supported on $K$. When $I(K)<+\infty$ the above infimum is attained by a unique probability measure
$\mu_{K}$ supported on $\partial K$, which is called the \textit{equilibrium measure} of $K$. The
\textit{logarithmic capacity} of $K$ is defined by
\[
{\rm{cap}}(K):=e^{-I(K)}.
\]
The logarithmic capacity is related to the Green function by the
following formula (\cite[Theorem 5.2.1 p. 132]{Ransford})
\begin{equation}\label{logarithmikixoritikotitakaisinartisigreen}
    {\rm{cap}}(K)=\exp\big(-\lim_{z\to\infty}\big(G_{\hat{\IC}\setminus
    K}(z,\infty)-\log|z|\big)\big).
\end{equation}
For more information about potential theory in the complex plane see e.g. \cite{Ransford}.

\subsection{Lindel\"of principle}
Let $f$ be a non-constant
holomorphic function on a Greenian domain $D$
such that $f(D)$ is Greenian.
The following inequality
is known as the \textit{Lindel\"of principle}:
\[
G_{f(D)}(w_{0},f(z))\geq\sum_{f(a)=w_{0}}m(a)G_{D}(a,z),
\]
where $z\in D$, $w_{0}\in f(D)$ and $m(a)$ is the
\textit{multiplicity} of the zero of $f(z)-f(a)$ at $a\in D$. It is
well known that, if $f$ is a proper holomorphic function from $D$ to
$f(D)$, then equality holds in the Lindel\"of principle (see e.g.
\cite{Heins Maurice}). For a characterization of the equality cases
in the Lindel\"of principle see \cite{Betsakos-Lindelof}.

\subsection{A majorization principle for harmonic measure under meromorphic functions}

We will use the following result of Dubinin for the behavior of
harmonic measure under certain meromorphic functions.

\begin{theor}[\protect{\cite[Theorem 2, p.753]{Dubinin-majorization}}]\label{DubininMajorization}
Let $D$ and $G$ be domains bounded by finitely many Jordan curves
and let $f$ be a meromorphic function on $D$ such that $f(\partial
D)\subset\IC\setminus G$. Suppose that the sets
$\gamma\subset\partial D$ and $f(\gamma)\subset\partial G$ consist
of finitely many open arcs, and positively oriented arcs from
$\gamma$ are mapped by $f$ to positively oriented arcs on $\partial
G$. If $w_{0}\in f(D)$, then
\begin{equation}\label{DubininInequality}
    \omega_{w_{0}}^{G}(f(\gamma))\leq\sum_{i=1}^{m}\omega_{z_{i}}^{D}(\gamma),
\end{equation}
where $z_{1},...,z_{m}$ are the zeros of the function $f-w_{0}$ if
$w_{0}\neq\infty$ and the zeros of $1/f$ if $w_{0}=\infty$ with
multiplicities taken into account. Equality in
(\ref{DubininInequality}) is attained if and only if $f$ is a proper
meromorphic function from $D$ to $G$ and the map $f:\gamma\mapsto
f(\gamma)$ is one to one.
\end{theor}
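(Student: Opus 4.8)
The plan is to prove the inequality by pushing the harmonic measure $v(z):=\omega_{z}^{D}(\gamma)$ forward through $f$ and comparing the result with $u(w):=\omega_{w}^{G}(f(\gamma))$ via the maximum principle. First I would record that, since $D$ and $G$ are bounded by finitely many Jordan curves, both $u$ and $v$ are genuine bounded harmonic functions, with boundary values $\mathbf{1}_{f(\gamma)}$ and $\mathbf{1}_{\gamma}$ respectively at all but the finitely many arc endpoints. The crucial preliminary observation is that $f(\partial D)\subset\IC\setminus G$ forces the valence $w\mapsto\#(f^{-1}(w)\cap D)$ (counted with multiplicity) to be a constant $n$ on the connected set $G$: it is integer-valued and locally constant off $f(\partial D)$, which does not meet $G$. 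Consequently $f$ restricts to a proper holomorphic map of degree $n$ from $O:=f^{-1}(G)\cap D$ onto $G$; properness holds because a sequence in $O$ whose images stay in a compact subset of $G$ cannot cluster on $\partial O\subset\partial D\cup(f^{-1}(\partial G)\cap D)$, every point of which is sent by $f$ into $\IC\setminus G$.

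Next I would form the pushforward $\Psi(w):=\sum_{f(z)=w,\,z\in O}m(z)\,v(z)$, so that $\Psi(w_{0})=\sum_{i=1}^{m}\omega_{z_{i}}^{D}(\gamma)$ is exactly the right-hand side of (\ref{DubininInequality}). Because $f|_{O}$ is proper and $v$ is harmonic, $\Psi$ is harmonic on $G$: away from the finitely many critical values it is a finite sum of compositions of $v$ with local inverse branches, and it extends harmonically across the branch points, where it stays bounded (indeed $0\le\Psi\le n$ since $0\le v\le1$). Thus $\Psi$ and $u$ are both bounded and harmonic on $G$, and the whole inequality reduces to the boundary comparison $\Psi\ge u$.

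To run the maximum principle I would check that $\liminf_{w\to w^{*}}\Psi(w)\ge\mathbf{1}_{f(\gamma)}(w^{*})$ for all but finitely many $w^{*}\in\partial G$. Off $\overline{f(\gamma)}$ this is immediate from $\Psi\ge0$. For $w^{*}\in f(\gamma)$, write $w^{*}=f(\zeta^{*})$ with $\zeta^{*}$ in the interior of an arc of $\gamma$; the hypothesis that $f$ carries positively oriented arcs of $\gamma$ to positively oriented arcs of $\partial G$ guarantees that $f$ is, near $\zeta^{*}$, an orientation-preserving local homeomorphism sending the $D$-side to the $G$-side, so for $w\in G$ close to $w^{*}$ there is a simple preimage $z(w)\in O$ with $z(w)\to\zeta^{*}$ and hence $v(z(w))\to1$. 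Keeping only this term gives $\liminf_{w\to w^{*}}\Psi(w)\ge1$. Since $\Psi-u$ is then a bounded harmonic function on $G$ with nonnegative lower boundary limits outside a finite set, the maximum principle yields $\Psi\ge u$ on $G$; evaluating at $w_{0}$ gives (\ref{DubininInequality}). The case $w_{0}=\infty$ is handled the same way after a M\"obius change of variable sending $\infty$ to a finite point of $G$, which turns the zeros of $1/f$ into ordinary preimages.

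Finally, for the equality statement I would argue that $\Psi-u\ge0$ is harmonic on the connected domain $G$ and vanishes at $w_{0}$, so by the strong maximum principle $\Psi\equiv u$; in particular $\Psi$ has boundary values $\mathbf{1}_{f(\gamma)}$ at almost every point of $\partial G$. Reading this off the boundary forces $f$ to be injective on $\gamma$ — two distinct preimages in $\gamma$ of some $w^{*}\in f(\gamma)$ would, by the orientation condition, both be approached from $G$ and push the boundary value of $\Psi$ up to at least $2$, contradicting $u\le1$ — and it forces $O=D$, i.e. $f:D\to G$ proper, since any escape of a sheet through a part of $\partial D$ not mapped onto $\partial G$, or through $f^{-1}(\partial G)\cap D$, would create a boundary discrepancy between $\Psi$ and $\mathbf{1}_{f(\gamma)}$. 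Conversely, if $f:D\to G$ is proper and $f|_{\gamma}$ is one-to-one, then $O=D$, the boundary values of $\Psi$ are exactly $\mathbf{1}_{f(\gamma)}$, and uniqueness of the solution of the Dirichlet problem gives $\Psi=u$, hence equality. I expect this converse-to-equality direction, together with the careful boundary analysis of $f$ near $\partial D$ (continuation of $f$ to the arcs, the precise role of the orientation hypothesis, and the negligibility of the arc endpoints), to be the main obstacle; the inequality itself is comparatively soft once the constant-valence and pushforward structure is in place.
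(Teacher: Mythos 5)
First, a point of reference: the paper does not prove this statement at all --- it is quoted verbatim from Dubinin \cite{Dubinin-majorization} and used as a black box (together with the remark that it extends to Borel subsets $E\subset\gamma$). So your attempt can only be measured against Dubinin's original argument, not against anything internal to this paper. That said, your architecture for the inequality --- constant valence $n$ on $G$ via the argument principle, properness of $f$ on $O:=f^{-1}(G)\cap D$, harmonicity of the pushforward $\Psi$, and comparison of $\Psi$ with $u$ by the maximum principle allowing finitely many exceptional boundary points --- is sound, and the reduction of everything to the boundary estimate $\liminf\Psi\ge\mathbf{1}_{f(\gamma)}$ is the right move.

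The genuine gap is in the one step where the orientation hypothesis actually enters. Your justification --- that the hypothesis makes $f$ ``an orientation-preserving local homeomorphism sending the $D$-side to the $G$-side'' near $\zeta^{*}$ --- is false. The boundary curves are only Jordan curves, so $f$ need not extend across $\gamma$, and even when it does it can fail to be locally injective: take $f(z)=z^{3}$ near $\zeta^{*}=0$ with $D$ locally the upper half-plane. Then $f$ maps $\IR$ onto $\IR$ homeomorphically and preserving orientation, yet it is $2$-to-$1$ from the $D$-side onto the $G$-side near $0$ and also sends part of the $D$-side to the opposite side of the image arc. What is true, and all you actually need, is weaker: for $w\in G$ near $w^{*}$ there is \emph{at least one} preimage in $O$ tending to $\zeta^{*}$. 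This follows from a winding-number computation, not a local homeomorphism claim: choose a small disc $U$ about $\zeta^{*}$ with $w^{*}\notin f(D\cap\partial U)$; then the number of solutions of $f=w$ in $U\cap D$ increases by at least $1$ as $w$ crosses the positively oriented arc $f(U\cap\gamma)$ into $G$ (each crossing counts $+1$ precisely because of the orientation hypothesis), hence is $\ge 1$ on the $G$-side. The same repair is needed in your equality discussion, which in any case is only a sketch: ``any escape of a sheet would create a boundary discrepancy'' must be turned into an argument --- if $z^{*}\in f^{-1}(\partial G)\cap D$, use openness of $f$ at $z^{*}$ and the strict positivity of $v$ on $D$ (minimum principle) to force $\liminf_{w\to f(z^{*})}\Psi(w)$ above the boundary value of $u$, after perturbing $z^{*}$ so that $f(z^{*})$ is not an arc endpoint; conclude $O$ is open and closed in $D$, hence $O=D$. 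The converse direction likewise needs a non-obvious fact, again proved by winding numbers: for a proper map exactly one sheet clusters at each boundary preimage, which is what lets injectivity of $f|_{\gamma}$ yield $\limsup\Psi\le 1$ on $f(\gamma)$ and hence $\Psi\equiv u$ by uniqueness in the Dirichlet problem.
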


We note that inequality (\ref{DubininInequality}) and the equality
statement of Theorem \ref{DubininMajorization} remain true if we
replace $\gamma$ with an arbitrary Borel set $E\subset\gamma$.

\section{A reflection principle for harmonic measure}

In the following theorem we prove a reflection principle for the
harmonic measure of rational lemniscates, taking into account the
zeros and the poles of the rational function.

\begin{theor}\label{armonikometroritonsinartiseon}
Let $R$ be a rational function of degree $d$, let
$\zeta_{1},...,\zeta_{d}$ be the zeros and $p_{1},...,p_{d}$ be the
poles of $R$ and let $\Omega:=R^{-1}(\ID)$. Then
\begin{equation}\label{isotitaarmonikonmetron}
    \sum_{i=1}^{d}\omega_{p_{i}}^{\hat{\IC}\setminus\overline{\Omega}}(E)=\sum_{j=1}^{d}\omega_{\zeta_{j}}^{\Omega}(E),
\end{equation}
for every Borel set $E\subset\partial\Omega$.
\end{theor}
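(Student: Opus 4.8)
The plan is to reduce the identity to the equality case of Dubinin's majorization principle (Theorem~\ref{DubininMajorization}) applied to the two proper restrictions of $R$ cut out by $\partial\ID$, and then to exploit the symmetry of harmonic measure on $\partial\ID$ under reflection across the unit circle. First I would record the basic structure. Since $\partial\ID$ contains neither $0$ nor $\infty$, every zero $\zeta_j$ lies in $\Omega=R^{-1}(\ID)$ and every pole $p_i$ lies in $\hat{\IC}\setminus\overline{\Omega}=R^{-1}(\hat{\IC}\setminus\overline{\ID})$, while $\partial\Omega\subset R^{-1}(\partial\ID)$. Because preimages of compact subsets of $\ID$ (resp.\ of $\hat{\IC}\setminus\overline{\ID}$) are compact, the restrictions $R\colon\Omega\to\ID$ and $R\colon\hat{\IC}\setminus\overline{\Omega}\to\hat{\IC}\setminus\overline{\ID}$ are proper holomorphic maps, each of degree $d$, with $R(\partial\Omega)=\partial\ID$.

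Next I would invoke Theorem~\ref{DubininMajorization} in its equality form. For the first map take $D=\Omega$, $G=\ID$ and $w_0=0$; the zeros of $R-w_0$ in $\Omega$ are exactly $\zeta_1,\dots,\zeta_d$. For the second take $D=\hat{\IC}\setminus\overline{\Omega}$, $G=\hat{\IC}\setminus\overline{\ID}$ and $w_0=\infty$; the zeros of $1/R$ in this domain are exactly $p_1,\dots,p_d$. Since $R$ is proper in each case, equality holds in~\eqref{DubininInequality} on any boundary set on which $R$ is one-to-one, and by the remark following Theorem~\ref{DubininMajorization} it holds for Borel subsets of such a set. Thus, for a Borel set $E\subset\partial\Omega$ contained in an arc on which $R$ is one-to-one and orientation-consistent,
\begin{equation*}
\sum_{j=1}^{d}\omega_{\zeta_j}^{\Omega}(E)=\omega_{0}^{\ID}\big(R(E)\big),\qquad
\sum_{i=1}^{d}\omega_{p_i}^{\hat{\IC}\setminus\overline{\Omega}}(E)=\omega_{\infty}^{\hat{\IC}\setminus\overline{\ID}}\big(R(E)\big).
\end{equation*}
The two right-hand sides coincide: the harmonic measure of $\ID$ from its centre and of $\hat{\IC}\setminus\overline{\ID}$ from $\infty$ are both equal to normalized arclength on $\partial\ID$, the latter following from the former via the conformal map $w\mapsto 1/w$, which carries $\hat{\IC}\setminus\overline{\ID}$ onto $\ID$, sends $\infty$ to $0$, and preserves arclength on $\partial\ID$. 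This yields~\eqref{isotitaarmonikonmetron} for such $E$.

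To pass to an arbitrary Borel set I would remove the finite set $C$ of critical points of $R$ lying on $\partial\Omega$. Away from $C$ the boundary is a finite union of analytic arcs along which $R$ is a local homeomorphism, so it can be partitioned into finitely many arcs on each of which $R$ is injective and orientation is respected. Writing $E\setminus C$ as the disjoint union of its Borel intersections with these arcs, applying the two displayed identities on each piece, summing, and using that the finite set $C$ carries no harmonic measure for either domain, gives~\eqref{isotitaarmonikonmetron} in full generality.

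The main obstacle I anticipate is not the algebra but the boundary regularity needed to apply Theorem~\ref{DubininMajorization}: for a general rational $R$ the level set $R^{-1}(\partial\ID)$ fails to be a disjoint union of smooth Jordan curves precisely when $1$ is a critical value of $|R|$, so $\partial\Omega$ may carry finitely many self-intersections or cusps. I would deal with this either by checking directly that these finitely many exceptional points are negligible for both harmonic measures and do not obstruct the injective-arc decomposition, or, if cleaner, by first establishing the identity on the regular level sets $\{z:|R(z)|=t\}$ for $t$ avoiding the finite set of critical values of $|R|$ and then letting $t\to1$ by continuity of harmonic measure. A secondary point to track carefully is the orientation hypothesis in Theorem~\ref{DubininMajorization}; since $R$ is holomorphic it preserves orientation, so the positively oriented boundary of $\Omega$ maps to the positively oriented $\partial\ID$, and for the exterior domain the reversed orientations again match, but this must be verified consistently in both applications. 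I note that the same identity can alternatively be obtained from the equality case of the Lindel\"of principle, which gives $\sum_{j}G_{\Omega}(\zeta_j,\cdot)=-\log|R|$ on $\Omega$ and $\sum_{i}G_{\hat{\IC}\setminus\overline{\Omega}}(p_i,\cdot)=\log|R|$ on the complement, whence both sides of~\eqref{isotitaarmonikonmetron} equal the same normal-derivative integral of $\log|R|$ across $\partial\Omega$; this route trades the arc decomposition for the regularity of the normal-derivative representation of harmonic measure.
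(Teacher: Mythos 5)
Your proposal is correct and follows essentially the same route as the paper's own proof: the equality case of Dubinin's majorization principle (Theorem~\ref{DubininMajorization}) applied on both sides of $\partial\Omega$, identification of both right-hand sides with normalized arclength $\omega_{0}^{\ID}(R(E))=\omega_{\infty}^{\hat{\IC}\setminus\overline{\ID}}(R(E))$, and a decomposition of $\partial\Omega$ minus the finitely many critical points into arcs of injectivity, which carry no harmonic measure when removed. The only (easily repaired) imprecision is that Theorem~\ref{DubininMajorization} is stated for \emph{domains}, so, as the paper does, it should be applied to the connected components $A_{1}\subset\Omega$ and $A_{2}\subset\hat{\IC}\setminus\overline{\Omega}$ whose boundaries contain the given arc, rather than to the full, possibly disconnected, open sets $\Omega$ and $\hat{\IC}\setminus\overline{\Omega}$.
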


\begin{proof}
Let
$\Gamma:=\partial\Omega\setminus\{\zeta\in\partial\Omega:R'(\zeta)=0\}$.
Then there is a decomposition of $\Gamma$ by half-open arcs
$\Gamma_{1},\dots,\Gamma_{d}$ such that $R$ is injective on
$\Gamma_{i}$, $i=1,\dots,d$. Fix $i\in\{1,2,...,d\}$ and let $E$ be a
Borel subset of $\Gamma_{i}$. Let $A_{1}$ be the connected component
of $\Omega$ with $E\subset\partial A_{1}$ and let $B_{1}$ be the set
of zeros of $R$ on $A_{1}$. Also, let $A_{2}$ be the connected
component of $\hat{\IC}\setminus\overline{\Omega}$ with
$E\subset\partial A_{2}$ and let $B_{2}$ be the set of poles of $R$
on $A_{2}$. We note that $R$ is a proper meromorphic function from
$A_{1}$ to $\ID$ and from $A_{2}$ to
$\hat{\IC}\setminus\overline{\ID}$. From Theorem~\ref{DubininMajorization},
\begin{equation}\label{reflectionprincipleequality}
\begin{aligned}
\sum_{i=1}^{d}\omega_{p_{i}}^{\hat{\IC}\setminus\overline{\Omega}}(E)
&=\sum_{p\in B_{2}}m(p)\omega_{p}^{\hat{\IC}\setminus\overline{\Omega}}(E) 
=\omega_{\infty}^{\hat{\IC}\setminus\overline{\ID}}(R(E)) \\
&= \omega_{0}^{\ID}(R(E)) 
= \sum_{\zeta\in B_{1}}m(\zeta)\omega_{\zeta}^{\Omega}(E) 
= \sum_{j=1}^{d}\omega_{\zeta_{j}}^{\Omega}(E).
\end{aligned}
\end{equation}

For an arbitrary Borel set $E\subset\partial\Omega$, we may assume
that $E\subset\Gamma$, since harmonic measure does not change by
removing a finite number of points from $E$. Then, from the equality
\eqref{reflectionprincipleequality},
\[
\sum_{j=1}^{d}\omega_{\zeta_{j}}^{\Omega}(E) 
= \sum_{n=1}^{d}\sum_{j=1}^{d}\omega_{\zeta_{j}}^{\Omega}(E\cap \Gamma_{n}) 
= \sum_{n=1}^{d}\sum_{i=1}^{d}\omega_{p_{i}}^{\hat{\IC}\setminus\overline{\Omega}}(E\cap \Gamma_{n}) 
= \sum_{i=1}^{d}\omega_{p_{i}}^{\hat{\IC}\setminus\overline{\Omega}}(E).
\qedhere
\]
\end{proof}

\begin{rem}
Theorem~\ref{armonikometroritonsinartiseon} is a close relative of problems
on the proportionality of harmonic measures studied in \cite{AKS06} and \cite{So09}.
\end{rem}

In the next theorem we show that the reflection principle for
rational lemniscates proved above actually characterizes rational
functions among proper holomorphic functions.

\begin{theor}\label{xaraktirismosproperritonsinartiseonapoarmonikometro}
Let $\Omega$ be a finitely connected domain bounded by $d$ disjoint analytic Jordan curves $\gamma_{1},\dots,\gamma_{d}$,
with $\infty\in\Omega$.
Let $f$ be a proper holomorphic function of degree $d$ from $\Omega$ to $\ID$ and let $\zeta_{1},\dots,\zeta_{d}$ be its zeros.
Suppose further that, for every $i=1,\dots,d$ there exists $p_{i}$ in the bounded component of $\hat{\IC}\setminus\gamma_{i}$
such that
\begin{equation}\label{isotitaarmonikonmetrongiaxaraktirismo}
    \sum_{i=1}^{d}\omega_{p_{i}}^{\hat{\IC}\setminus\overline{\Omega}}(E)=\sum_{j=1}^{d}\omega_{\zeta_{j}}^{\Omega}(E)
\end{equation}
for every Borel set $E\subset\partial\Omega$. Then $f$ is a rational function.
\end{theor}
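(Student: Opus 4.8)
The plan is to build a single harmonic function on the whole sphere out of the two families of Green potentials attached to the zeros $\zeta_j$ and to the points $p_i$, and then to recognise it as the logarithm of the modulus of a rational function. Write $D_i$ for the bounded component of $\hat{\IC}\setminus\gamma_i$, so that $\hat{\IC}\setminus\overline{\Omega}=\bigcup_{i=1}^{d}D_i$ and $p_i\in D_i$. Since $f$ is proper from $\Omega$ onto $\ID$ of degree $d$, equality holds in the Lindel\"of principle, which applied to $f$ and $w_{0}=0$ gives
\[
\log\frac{1}{|f(z)|}=G_{\ID}(0,f(z))=\sum_{j=1}^{d}G_{\Omega}(z,\zeta_{j})=:U(z),\qquad z\in\Omega .
\]
On the complementary side I would set $V(z):=G_{D_i}(z,p_{i})$ for $z\in D_i$, and then define $W$ on $\hat{\IC}$ by $W:=U$ on $\overline{\Omega}$ and $W:=-V$ on each $\overline{D_i}$. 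This is well defined and continuous, because $U=0$ on $\partial\Omega$ and $V=0$ on each $\gamma_i$.

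The crux is to show that $W$ is harmonic across each analytic curve $\gamma_i$. Since the $\gamma_i$ are analytic, the Green functions extend smoothly to the boundary and harmonic measure has a smooth density, namely $d\omega_{a}^{D}=\tfrac{1}{2\pi}(\partial G_{D}(\cdot,a)/\partial n)\,ds$ with $n$ the inner normal. For a Borel set $E\subset\gamma_i$ only the term with index $i$ contributes on the left of \eqref{isotitaarmonikonmetrongiaxaraktirismo} (the harmonic measure of $p_{k}$ lives on $\gamma_k$), so comparing densities turns the hypothesis into the pointwise identity
\[
\frac{\partial V}{\partial n_{D_i}}=\sum_{j=1}^{d}\frac{\partial G_{\Omega}(\cdot,\zeta_{j})}{\partial n_{\Omega}}=\frac{\partial U}{\partial n_{\Omega}}\qquad\text{on }\gamma_i ,
\]
where $n_{\Omega}$ and $n_{D_i}$ are the inner normals, which point in opposite directions along $\gamma_i$. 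Fixing a common orientation, this says exactly that the one-sided normal derivatives of $W$ from $\Omega$ and from $D_i$ agree; together with $W=0$ on $\gamma_i$ from both sides, $W$ is $C^{1}$ across the analytic arc $\gamma_i$ and harmonic on either side of it, hence harmonic across $\gamma_i$. This matching step is the main obstacle, and the analyticity of the $\gamma_i$ is precisely what makes the reflection go through.

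Once this is established, $W$ is harmonic on $\hat{\IC}$ apart from logarithmic singularities: $W(z)\sim\log(1/|z-\zeta_{j}|)$ near each zero $\zeta_j$ and $W(z)\sim\log|z-p_{i}|$ near each $p_i$. Subtracting $\sum_{j}\log(1/|z-\zeta_{j}|)-\sum_{i}\log(1/|z-p_{i}|)$ removes all of these, and because the numbers of zeros and poles coincide the remainder stays bounded at $\infty$; it is therefore harmonic on all of $\hat{\IC}$ and hence constant. Exponentiating, $W=\log|g|$ for a rational function $g$ with poles at the $\zeta_j$ and zeros at the $p_i$. On $\Omega$ this gives $|f|=1/|g|$, so $fg$ is holomorphic, zero-free and unimodular on $\Omega$ (the zeros of $f$ exactly cancel the poles of $g$, with matching multiplicities read off from the coefficients of the Green functions in $U$). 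Consequently $fg$ is a unimodular constant $\lambda$, and $f=\lambda/g$ is rational.

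The one place that needs extra care is when $f(\infty)=0$, i.e.\ some $\zeta_j=\infty$: then $U\sim\log|z|$ near $\infty$ and $W$ carries a logarithmic singularity there as well, so in removing singularities one must account for $\infty$ separately. The same bookkeeping shows that $g$ simply acquires a pole at $\infty$, and the conclusion that $f$ is rational is unchanged.
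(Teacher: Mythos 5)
Your proof is correct, and it takes a genuinely different route from the paper's. Both arguments start from an equality case of a majorization principle for proper maps, but the paper uses it in \emph{measure} form: by Dubinin's theorem (Theorem~2.1 of the paper), $\sum_{j}\omega_{\zeta_j}^{\Omega}=m\circ f_i$ on each $\gamma_i$, where $m$ is normalized Lebesgue measure on $\partial\ID$ and $f_i$ is the injective analytic extension of $f$ across $\gamma_i$; combining this with the hypothesis and with $\omega_{p_i}^{D_i}=m\circ\phi_i$, where $\phi_i:D_i\to\hat{\IC}\setminus\overline{\ID}$ is conformal with $\phi_i(p_i)=\infty$, the paper concludes that $f_i\circ\phi_i^{-1}$ is a Lebesgue-measure-preserving circle diffeomorphism, hence a rotation, so $f=\lambda\phi_i$ near $\gamma_i$ by the identity principle; this extends $f$ meromorphically into each $D_i$ with a pole at $p_i$, making $f$ rational. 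You instead work with potentials: the Lindel\"of equality (Heins, also quoted in the paper's preliminaries) gives $\log(1/|f|)=\sum_j G_{\Omega}(\cdot,\zeta_j)$, the measure hypothesis is converted into equality of normal derivatives of Green functions along each analytic curve, the two potentials are glued into a function harmonic on $\hat{\IC}$ away from logarithmic singularities, and Liouville on the sphere finishes the identification. What each buys: the paper's proof is shorter once Dubinin's equality statement is available and exhibits $f$ on each $D_i$ concretely as a rotated conformal map; yours stays within classical potential theory, needs no initial normalization $f(\infty)=0$, and yields the explicit formula $f=\mu\prod_j(z-\zeta_j)/\prod_i(z-p_i)$ (suitably interpreted if some $\zeta_j=\infty$), showing in particular that the poles of $f$ are simple and located exactly at the $p_i$. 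Two steps you assert deserve a line of justification, though both are standard for analytic boundaries: equality of measures on $\gamma_i$ gives equality of the continuous densities; and the gluing itself, which can be justified either weakly (Green's identity plus Weyl's lemma) or by noting that the Schwarz reflections of $U$ and of $-V$ across $\gamma_i$ are harmonic functions with the same Cauchy data on $\gamma_i$, hence coincide.
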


\begin{proof}
From the translation-invariance of harmonic measure we may assume
that $f(\infty)=0$. Let $D_{i}$ be the bounded component of
$\hat{\IC}\setminus\gamma_{i}$, $i=1,\dots,d$. We note that $f$ has
an analytic continuation on a neighborhood $V_{i}$ of $\partial
D_{i}$ and we may choose it such that the restriction $f_{i}$ of $f$
on $V_{i}$ is injective, $i=1,\dots,d$. Suppose that
(\ref{isotitaarmonikonmetrongiaxaraktirismo}) holds and let $m$ be
the normalized Lebesgue measure on the circle $\partial\ID$. From
Theorem~\ref{DubininMajorization},
\[
\sum_{j=1}^{d}\omega_{\zeta_{j}}^{\Omega}=\omega_{0}^{\ID}\circ
f_{i}=m\circ f_{i},\qquad \mbox{ on }V_{i},
\]
for every $i=1,\dots,d$. From
(\ref{isotitaarmonikonmetrongiaxaraktirismo}) we have
\[
\sum_{j=1}^{d}\omega_{\zeta_{j}}^{\Omega}=\omega_{p_{i}}^{D_{i}},\qquad \mbox{ on }V_{i}.
\]
Therefore $m\circ f_{i}=\omega_{p_{i}}^{D_{i}}$ on $V_{i}$,
$i=1,\dots,d$. Let $\phi_{i}$ be a conformal map of $D_{i}$ onto
$\hat{\IC}\setminus\overline{\ID}$ with $\phi_{i}(p_{i})=\infty$. We
may assume that $\phi_{i}$ has an analytic continuation on $V_{i}$.
From the conformal invariance of harmonic measure we obtain that
$\omega_{p_{i}}^{D_{i}}=\omega_{\infty}^{\hat{\IC}\setminus\overline{\ID}}\circ
\phi_{i}=m\circ \phi_{i}$. Therefore $m\circ f_{i}=m\circ \phi_{i}$
or $m\circ (f_{i}(\phi_{i}^{-1}(E)))=m(E)$, for every Borel set
$E\subset\partial\ID$. We obtain that $f_{i}\circ\phi_{i}^{-1}$ is a
diffeo\-morphism of $\partial\ID$ that preserves Lebesgue measure.
Therefore, $f_{i}\circ\phi_{i}^{-1}(\zeta)=\lambda\zeta$, or
$f_{i}(\zeta)=\lambda\phi_{i}(\zeta)$ for every
$\zeta\in\partial\ID$ and for some $\lambda\in\partial\ID$. From the
identity principle we have that
$f_{i}(\zeta)=\lambda\phi_{i}(\zeta)$ for every $\zeta\in V_{i}$.
Since this is true for every $i=1,\dots,d$, we obtain an extension
of $f$ as a meromorphic function on $\hat{\IC}$ with poles at
$p_{1},\dots,p_{d}$. Therefore, $f$ is a rational function.
\end{proof}

\section{Capacity of rational lemniscates}

In the following theorem we consider the logarithmic capacity of the lemniscate
$K$ of a $d$-good rational function $R$
and of its components $K_{i}$, $i=1,\dots,d$.

\begin{theor}\label{tiposgialogarithmikixoritikotita}
Let $R$ be a $d$-good rational function. Then
\begin{equation}\label{anisotitagialogarithmikixoritikotitaOmegaai}
   {\rm{cap}}(K_{i})\geq|a_{i}|,\qquad i=1,\dots,d,
\end{equation}
and
\begin{equation}\label{anisotitagialogarithmikixoritikotitaK}
   {\rm{cap}}(K)\geq\Big[\prod_{\substack{i,j=1\\i\neq j}}^{d}|p_{i}-p_{j}|\prod_{i=1}^{d}|a_{i}|\Big]^{\frac{1}{d^{2}}}.
\end{equation}
\end{theor}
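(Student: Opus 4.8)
The plan is to prove both inequalities from a single potential-theoretic device: I use the interior harmonic measures $\sigma_{i}:=\omega_{p_{i}}^{D_{i}}$ (each a Borel probability measure carried by $\partial K_{i}=\gamma_{i}$) as competitors in the variational characterization of capacity. Writing $U^{\nu}(z):=\int\log\frac{1}{|z-w|}\,d\nu(w)$ for the logarithmic potential, and recalling that $I(K)=\iint\log\frac{1}{|z-w|}\,d\mu_{K}\,d\mu_{K}=-\log{\rm{cap}}(K)$ is the minimum of $\iint\log\frac{1}{|z-w|}\,d\nu\,d\nu$ over probability measures $\nu$ on $K$, every probability measure supported on $K$ will furnish an upper bound for $-\log{\rm{cap}}(K)$. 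Inequality \eqref{anisotitagialogarithmikixoritikotitaOmegaai} is then the single-component ($d=1$) instance of the estimate used for \eqref{anisotitagialogarithmikixoritikotitaK}, so the whole theorem reduces to two energy computations plus this minimality.

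First I would record the diagonal energy $I(\sigma_{i})=-\log|a_{i}|$. Since $Q_{i}=1/R_{i}$ satisfies $Q_{i}(p_{i})=0$ and $Q_{i}'(p_{i})=1/a_{i}$, its inverse $P_{i}:\ID\to D_{i}$ has $P_{i}(0)=p_{i}$ and $P_{i}'(0)=a_{i}$; hence the conformal radius of $D_{i}$ at $p_{i}$ is $|a_{i}|$, which is to say $G_{D_{i}}(z,p_{i})=-\log|z-p_{i}|+\log|a_{i}|+o(1)$ as $z\to p_{i}$. Combining this with the standard identity $U^{\sigma_{i}}(z)=\log\frac{1}{|z-p_{i}|}-G_{D_{i}}(z,p_{i})$ for $z\in D_{i}$ (which extends to $\partial D_{i}$ by continuity, as $\gamma_{i}$ is analytic and hence regular), one has $U^{\sigma_{i}}=\log\frac{1}{|\cdot-p_{i}|}$ on $\partial D_{i}$, so that $I(\sigma_{i})=\int_{\partial D_{i}}\log\frac{1}{|z-p_{i}|}\,d\sigma_{i}(z)=\lim_{z\to p_{i}}\bigl(\log\frac{1}{|z-p_{i}|}-G_{D_{i}}(z,p_{i})\bigr)=-\log|a_{i}|$. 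Since $\sigma_{i}$ is a probability measure on $K_{i}$, minimality gives $-\log{\rm{cap}}(K_{i})=I(K_{i})\le I(\sigma_{i})=-\log|a_{i}|$, which is \eqref{anisotitagialogarithmikixoritikotitaOmegaai}.

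Next I would compute the off-diagonal energies $\iint\log\frac{1}{|z-w|}\,d\sigma_{i}\,d\sigma_{j}=\log\frac{1}{|p_{i}-p_{j}|}$ for $i\neq j$. The key observation is that $\sigma_{i}$ is the balayage of the Dirac mass $\delta_{p_{i}}$ onto $\partial D_{i}$, so $U^{\sigma_{i}}$ coincides with $U^{\delta_{p_{i}}}=\log\frac{1}{|\cdot-p_{i}|}$ on the exterior $\hat{\IC}\setminus K_{i}$; this exterior is connected and contains $\partial K_{j}$ because the $K_{i}$ are pairwise disjoint and $\infty\in\Omega$. Integrating against $\sigma_{j}=\omega_{p_{j}}^{D_{j}}$ and using that $w\mapsto\log\frac{1}{|w-p_{i}|}$ is harmonic on $D_{j}$ (as $p_{i}\notin\overline{D_{j}}$) and continuous up to $\partial D_{j}$, the integral evaluates at $p_{j}$, giving exactly $\log\frac{1}{|p_{i}-p_{j}|}$.

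Finally I would apply the minimality to the probability measure $\mu:=\frac{1}{d}\sum_{i=1}^{d}\sigma_{i}$ on $K$. Expanding and inserting the two computations yields $-\log{\rm{cap}}(K)\le\frac{1}{d^{2}}\sum_{i,j}\iint\log\frac{1}{|z-w|}\,d\sigma_{i}\,d\sigma_{j}=\frac{1}{d^{2}}\bigl[\sum_{i}(-\log|a_{i}|)+\sum_{i\neq j}(-\log|p_{i}-p_{j}|)\bigr]$, and exponentiating gives \eqref{anisotitagialogarithmikixoritikotitaK}. I expect the main obstacle to be the two potential identities of the middle paragraphs, namely the Green-function representation of $U^{\sigma_{i}}$ inside $D_{i}$ and the invariance of $U^{\sigma_{i}}$ under balayage outside $K_{i}$; once the regularity afforded by the analytic boundary curves $\gamma_{i}$ is used to guarantee continuity of the potentials and validity of the boundary evaluations, the remaining steps are purely the variational minimality already available from the definition of ${\rm{cap}}$ together with \eqref{logarithmikixoritikotitakaisinartisigreen}.
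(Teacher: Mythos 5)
Your proposal is correct and follows essentially the same route as the paper: both use $\mu=\frac{1}{d}\sum_i\omega_{p_i}^{D_i}$ (resp.\ $\omega_{p_i}^{D_i}$ alone) as trial measures in the energy-minimization definition of capacity, compute the diagonal energies $I(\omega_{p_i}^{D_i})=-\log|a_i|$ and off-diagonal energies $-\log|p_i-p_j|$, and conclude by minimality. Your phrasing of the diagonal computation via the conformal radius $|P_i'(0)|=|a_i|$ and of the off-diagonal one via balayage is just a repackaging of the paper's computation with $H_{D_i}$ and harmonicity of $\log\frac{1}{|\cdot-w|}$, so no substantive difference remains.
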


\begin{rem}
Inequality \eqref{anisotitagialogarithmikixoritikotitaOmegaai} is a form of
Lavrent$'$ev's inequality on the product of conformal radii of two non-overlapping
simply connected domains (see e.g.\ \cite[p.223, Corollary~1]{Le75} or \cite{Du94}).
The proof given below is different.
We believe that inequality \eqref{anisotitagialogarithmikixoritikotitaK} is new,
though several similar inequalities were obtained in \cite[Chapter~3, \S6]{Le75}.
\end{rem}

\begin{proof} Let $D_{i}$ denote the interior of $K_{i}$, $i=1,\dots,d$.
Let $\mu:=\sum_{i=1}^{d}\omega_{p_{i}}^{D_{i}}$.
We have $I(\mu)=\sum_{i,j=1}^{d}I(\omega_{p_{i}}^{D_{i}},\omega_{p_{j}}^{D_{j}})$.
For $i\neq j$,
\begin{align*}
  I(\omega_{p_{i}}^{D_{i}},\omega_{p_{j}}^{D_{j}})
  &= \iint\log\frac{1}{|z-w|}d\omega_{p_{i}}^{D_{i}}(z)d\omega_{p_{j}}^{D_{j}}(w) \\
   &= \int\log\frac{1}{|p_{i}-w|}d\omega_{p_{j}}^{D_{j}}(w) \\
   &= \log\frac{1}{|p_{i}-p_{j}|}.
\end{align*}
For $i=j$, we note that
\[
H_{D_{i}}(z,w):=G_{D_{i}}(z,w)-\log\frac{1}{|z-w|}=-\int\log\frac{1}{|a-w|}d\omega_{z}^{D_{i}}(a)
\]
since $D_{i}$ is bounded (\cite[Theorem 4.4.7, p. 110]{Ransford}), and we obtain
\begin{align*}
I(\omega_{p_{i}}^{D_{i}})
&= \int\int\log\frac{1}{|z-w|}d\omega_{p_{i}}^{D_{i}}(z)d\omega_{p_{i}}^{D_{i}}(w) \\
   &= -\int H_{D_{i}}(p_{i},w)d\omega_{p_{i}}^{D_{i}}(w) \\
   &= -H_{D_{i}}(p_{i},p_{i}) \\
   &= -\lim_{z\to p_{i}}\Big[G_{D_{i}}(z,p_{i})-\log\frac{1}{|z-p_{i}|}\Big] \\
   &= -\lim_{z\to p_{i}}\Big[G_{\hat{\IC}\setminus\overline{\ID}}(R_{i}(z),\infty)-\log\frac{1}{|z-p_{i}|}\Big] \\
   &= -\lim_{z\to p_{i}}\log|(z-p_{i})R_{i}(z)| \\
   &= -\lim_{z\to p_{i}}\log\Big|\sum_{j=1}^{d}\frac{a_{j}(z-p_{i})}{z-p_{j}}\Big| \\
   &= \log\frac{1}{|a_{i}|}. \\
\end{align*}
Therefore,
\begin{equation}\label{isotitagiatinenergeiatonarmonikonmetronmetouspoloustisritissinartisis}
    I(\mu)=\sum_{\substack{i,j=1\\i\neq j}}^{d}\log\frac{1}{|p_{i}-p_{j}|}+\sum_{i=1}^{d}\log\frac{1}{|a_{i}|}.
\end{equation}
Inequality (\ref{anisotitagialogarithmikixoritikotitaOmegaai}) follows from
\[
I(K_{i})\leq I(\omega_{p_{i}}^{D_{i}})=\log\frac{1}{|a_{i}|}
\]
and inequality (\ref{anisotitagialogarithmikixoritikotitaK}) follows from
\[
I(K)\leq I(\frac{\mu}{d})=\frac{1}{d^{2}}I(\mu),
\]
since $\mu/d$ is a Borel probability measure on $K$.
\end{proof}

\begin{rem}
Let $R$ be a rational function of degree $d$
having simple poles $p_{i}$, $i=1,\dots,d$, and satisfying $R(\infty)=0$.
Let $a_{i}$ be the residue of $R$ at $p_{i}$, $i=1,\dots,d$.
Then, there exists $t_{0}\in(0,+\infty)$ such that, for every $t\in(t_{0},+\infty)$,
$z\mapsto R(z)/t$ is a $d$-good rational function.
Let $K_{i,t}$ be the component of the lemniscate $K_{t}:=\{z\in\hat{\IC}:|R(z)|\geq t\}$
containing $p_{i}$, $t\in(t_{0},+\infty)$.
Then,
from {\rm{Theorem \ref{tiposgialogarithmikixoritikotita}}},
we obtain that
\[
t\cdot {\rm{cap}}(K_{i,t})\geq|a_{i}|,\qquad i=1,\dots,d,
\]
and
\[
t^{\frac{1}{d}}\cdot {\rm{cap}}(K_{t})\geq\Big[\prod_{\substack{i,j=1\\i\neq j}}^{d}|p_{i}-p_{j}|\prod_{i=1}^{d}|a_{i}|\Big]^{\frac{1}{d^{2}}},
\]
for every $t\in(t_{0},+\infty)$.
\end{rem}

We will also examine the behavior of the logarithmic capacity of the
lemniscates $K_{t}$ for $t\in(0,t_{0})$. In fact, we will consider
proper holomorphic functions.
Burckel, Marshall, Minda, Poggi-Corradini and Ransford \cite{Burckel-Marshall-Minda-Poggi-Corradini-Ransford}
proved geometric versions of Schwarz's lemma for a holomorphic function $f$ on the unit disc $\ID$
by showing that the function
\[
r\mapsto\frac{T(f(r\ID))}{T(r\ID)},\qquad 0<r<1,
\]
is increasing, where $T(E)$ may be area, diameter or logarithmic
capacity of $E$. In the same article they asked about analogues of Schwarz's lemma for the dual situation of
holomorphic functions defined on a domain $\Omega$ onto the unit disc, where $\Omega$ satisfies some geometric restriction.
Dubinin's inequality (\ref{Dubininlemniscateanisotita}) is a result of this type considering the area of the
lemniscates of a proper holomorphic function from a domain in $\hat{\IC}$ to a circular ring. In the following theorem we prove a monotonicity
principle for the logarithmic capacity of the lemniscates of a proper holomorphic function from a finitely connected
domain to the unit disc.

\begin{theor}\label{Schwarzlemmaproperolomorfessinartiseis}
Let $f$ be a proper holomorphic function from a domain $\Omega\subset\hat{\IC}$ to $\ID$ such that
$\infty\in\Omega$ and $f(\infty)=0$. For every $t\in(0,1)$ we let
\[
\Omega_{t}:=\{z\in\Omega:|f(z)|<t\}
\]
and $K_{t}:=\hat{\IC}\setminus\Omega_{t}$. Then the function
\[
F(t):=t^{\frac{1}{m(\infty)}}\cdot {\rm{cap}}(K_{t}),\qquad t\in(0,1),
\]
is non-decreasing and there exists $t_{1}\in (0,1)$ such that
\begin{equation}\label{elpizonaeinaiiteleutaia}
    F(t)=|f^{(m(\infty))}(\infty)|^{\frac{1}{m(\infty)}},\qquad t\in(0,t_{1}).
\end{equation}
\end{theor}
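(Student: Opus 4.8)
The plan is to reduce everything to an exact formula for $\log F(t)$ expressed through the Green function of the component of $\Omega_t$ containing $\infty$, evaluated at the \emph{finite} zeros of $f$; both the monotonicity and the constancy near $0$ then follow from elementary monotonicity properties of the Green function. Write $n:=m(\infty)$ and set $b:=\lim_{z\to\infty}z^{n}f(z)$, so that $f(z)=bz^{-n}+o(z^{-n})$ near $\infty$ and $|f^{(m(\infty))}(\infty)|=|b|$ (the leading coefficient at infinity, which is the interpretation I would fix at the outset). Let $U_{t}$ be the connected component of $\Omega_{t}$ containing $\infty$. Since the behaviour of $G_{\Omega_{t}}(\cdot,\infty)$ near $\infty$ is governed by $U_{t}$ alone, formula \eqref{logarithmikixoritikotitakaisinartisigreen} gives ${\rm{cap}}(K_{t})=\exp(-\lim_{z\to\infty}(G_{U_{t}}(z,\infty)-\log|z|))$; I write $\gamma_{t}:=-\log{\rm{cap}}(K_{t})$, so that $G_{U_{t}}(z,\infty)=\log|z|+\gamma_{t}+o(1)$.

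Next I would check that the restriction $f\colon U_{t}\to t\ID$ is proper and onto. Properness holds because $\partial U_{t}\subset\partial\Omega_{t}$ and $f\colon\Omega_{t}\to t\ID$ is proper (a proper map restricted to the preimage of an open set is proper); surjectivity follows since the holomorphic, hence open, and proper, hence closed, map $f$ sends the connected set $U_{t}$ to an open and closed subset of the connected set $t\ID$. Then the equality case of the Lindel\"of principle, applied to $f\colon U_{t}\to t\ID$ with pole $w_{0}=0=f(\infty)$ and using $G_{t\ID}(w,0)=\log(t/|w|)$, gives, with $\mathcal{Z}:=f^{-1}(0)$,
\[
\log\frac{t}{|f(z)|}=n\,G_{U_{t}}(z,\infty)+\sum_{a\in U_{t}\cap\mathcal{Z},\,a\neq\infty}m(a)\,G_{U_{t}}(z,a),\qquad z\in U_{t}.
\]
Letting $z\to\infty$ and comparing the $O(1)$ terms (the $n\log|z|$ contributions cancel, the finite-zero terms tend to $\sum_{a}m(a)G_{U_{t}}(a,\infty)$, and $\log(t/|f(z)|)=n\log|z|+\log(t/|b|)+o(1)$) yields $\log(t/|b|)=n\gamma_{t}+\sum_{a}m(a)G_{U_{t}}(a,\infty)$, whence
\[
\log F(t)=\tfrac{1}{n}\log t+\log{\rm{cap}}(K_{t})=\tfrac{1}{n}\log|f^{(m(\infty))}(\infty)|+\tfrac{1}{n}\sum_{a\in U_{t}\cap\mathcal{Z},\,a\neq\infty}m(a)\,G_{U_{t}}(a,\infty).
\]

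From this formula both assertions are immediate. For monotonicity: if $0<s<t<1$ then $\Omega_{s}\subset\Omega_{t}$, and since $U_{s}$ is connected and contains $\infty$ we get $U_{s}\subset U_{t}$; by monotonicity of the Green function under domain inclusion, $G_{U_{s}}(a,\infty)\le G_{U_{t}}(a,\infty)$ for each $a$, every term is non-negative, and the summation index set only grows, so $\log F(s)\le\log F(t)$. For the behaviour near $0$: the finitely many zeros $a\neq\infty$ lie at positive distance from $\infty$, so I would choose a neighbourhood $V$ of $\infty$ whose closure meets $\mathcal{Z}$ only at $\infty$ and on whose boundary $|f|\ge\delta>0$; for every $t<t_{1}:=\delta$ the connected set $U_{t}$ cannot cross $\partial V$ (where $|f|\ge\delta>t$), hence $U_{t}\subset V$ contains no finite zero, the sum is empty, and $F(t)=|f^{(m(\infty))}(\infty)|^{1/m(\infty)}$.

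The main obstacle I expect is the component bookkeeping: justifying that one may replace $\Omega_{t}$ by the single component $U_{t}$, that ${\rm{cap}}(K_{t})$ is governed by $U_{t}$ alone, and above all that $f\colon U_{t}\to t\ID$ is genuinely proper and surjective, so that the Lindel\"of principle holds \emph{with equality} there. A secondary technical point is regularity: for critical values of $|f|$ the level set $\partial\Omega_{t}=\{|f|=t\}$ may carry singular points, so I would extract the $o(1)$ asymptotics only for regular $t$ and then recover the general case from the monotonicity already established (equivalently, from continuity of ${\rm{cap}}(K_{t})$ and of the Robin constant in $t$).
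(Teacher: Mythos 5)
Your proposal is correct and follows essentially the same route as the paper's proof: restrict to the component $U_{t}=D_{t}$ of $\Omega_{t}$ containing $\infty$, apply the equality case of the Lindel\"of principle to the proper map $f\colon U_{t}\to t\ID$, let $z\to\infty$ to get the exact formula $\log F(t)=\frac{1}{m(\infty)}\log|f^{(m(\infty))}(\infty)|+\frac{1}{m(\infty)}\sum_{a}m(a)G_{U_{t}}(a,\infty)$, and deduce monotonicity from domain monotonicity and positivity of the Green function together with the growth of the zero sets, and constancy near $0$ from the absence of finite zeros in $U_{t}$ for small $t$. Your extra verifications (properness and surjectivity of the restriction, the explicit $t_{1}$) only flesh out steps the paper asserts, and your worry about non-regular values of $t$ is unnecessary, since the Lindel\"of equality requires no regularity of the level set.
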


\begin{proof} For $t\in(0,1)$, let $D_{t}$ denote the connected component of $\Omega_{t}$ that contains $\infty$,
and let $Z(t)$ be the set of zeros of $f$ in $D_{t}\setminus\{\infty\}$. Note that $f$ is a proper holomorphic
function from $D_{t}$ to $\ID$ and that
$Z(t_{1})\subset Z(t_{2})$ for $t_{1}\leq t_{2}$. Also, since the logarithmic capacity of a compact
set is equal to the logarithmic capacity of its outer boundary, we have that ${\rm{cap}}(K_{t})={\rm{cap}}(\hat{\IC}\setminus D_{t})$.
From the Lindel\"of principle we have that, for every $z\in D_{t}\setminus (Z(t)\cup\{\infty\})$,
\[
    \log\frac{t}{|z^{m(\infty)}f(z)|}=m(\infty)(G_{D_{t}}(z,\infty)-\log|z|)+\sum_{a\in Z(t)}m(a)G_{D_{t}}(z,a),
\]
and letting $z\to\infty$ we obtain
\[
{\rm{cap}}(K_{t})=\Big(\frac{|f^{(m(\infty))}(\infty)|}{t}\Big)^{\frac{1}{m(\infty)}}
    \exp\Big(\sum_{a\in Z(t)}\frac{m(a)}{m(\infty)}G_{D_{t}}(a,\infty)\Big).
\]
Then the monotonicity of the function $F$ follows from the positivity and the monotonicity
of the Green function and the monotonicity of the sets $Z(t)$. Also, equality
(\ref{elpizonaeinaiiteleutaia}) follows from the fact that there exists $t_{1}\in(0,1)$
such that $Z(t)=\varnothing$ for every $t\in(0,t_{1})$.
\end{proof}

We will make use of the fact that the restriction of a good rational function on one of the components
of its lemniscate is univalent. In this direction, the following well-known theorem about the growth
and the distortion of univalent functions in the unit disc will be useful.

\begin{theor}[\protect{\cite[Theorem 2.6, p. 33 and Corollary 7, p. 127]{Duren-univalentfunctions}}]\label{ektimisipilikoudiaforon}
If $f$ is holomorphic and univalent on $\ID$ such that
$f(0)=0$ and $f'(0)=1$, then
\[
\frac{r}{(1+r)^{2}}\leq|f(z)|\leq\frac{r}{(1-r)^{2}},
\]
for $|z|=r<1$ and
\[
\frac{1-r^{2}}{r^{2}}|f(z)f(w)|\leq\Big|\frac{f(z)-f(w)}{z-w}\Big|\leq\frac{|f(z)f(w)|}{r^{2}(1-r^{2})},
\]
for $|z|=|w|=r<1$ (for $z=w$ the difference quotient is to be interpreted as $f'(z)$).
\end{theor}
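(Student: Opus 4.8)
The plan is to reduce the entire statement to the single coefficient bound $|a_2|\le 2$ for the normalized class $S$ of univalent functions $f(z)=z+a_2z^2+\cdots$ on $\ID$, and then to propagate that bound to every point of $\ID$ by pre-composing $f$ with disc automorphisms. First I would establish the area theorem: if $g(\zeta)=\zeta+b_0+b_1\zeta^{-1}+\cdots$ is univalent on $\{|\zeta|>1\}$, then $\sum_{n\ge1}n|b_n|^2\le1$, proved by computing the area of the complement of the image and using that it is non-negative. Applying this to the odd square-root transform $h(z):=\sqrt{f(z^2)}=z+\tfrac{a_2}{2}z^3+\cdots$ and then to $\zeta\mapsto 1/h(1/\zeta)$ yields Bieberbach's inequality $|a_2|\le2$, which is the only analytic input I will need.

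Next I would prove the distortion bounds for $|f'|$. Fixing $z_0\in\ID$, the Koebe transform
\[
F(z):=\frac{f\bigl(\frac{z+z_0}{1+\overline{z_0}z}\bigr)-f(z_0)}{(1-|z_0|^2)f'(z_0)}
\]
again lies in $S$, and a direct computation of its second coefficient gives $a_2(F)=\tfrac12\bigl[(1-|z_0|^2)\frac{f''(z_0)}{f'(z_0)}-2\overline{z_0}\bigr]$. The inequality $|a_2(F)|\le2$ then reads $\bigl|(1-|z_0|^2)\frac{f''(z_0)}{f'(z_0)}-2\overline{z_0}\bigr|\le4$; writing $z_0=re^{i\theta}$, multiplying by $z_0$, taking real parts, and using $\frac{\partial}{\partial r}\log|f'(re^{i\theta})|=\frac1r\operatorname{Re}\frac{z_0f''(z_0)}{f'(z_0)}$ produces the two-sided differential inequality $\frac{2r-4}{1-r^2}\le\frac{\partial}{\partial r}\log|f'(re^{i\theta})|\le\frac{2r+4}{1-r^2}$. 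Integrating from $0$ to $r$ and using $f'(0)=1$ gives $\frac{1-r}{(1+r)^3}\le|f'(z)|\le\frac{1+r}{(1-r)^3}$. The upper growth bound $|f(z)|\le\frac{r}{(1-r)^2}$ then follows by integrating $|f'|$ along the radius, and the lower bound $|f(z)|\ge\frac{r}{(1+r)^2}$ by combining the Koebe quarter theorem with a path-integration of $1/|f'|$ along the preimage of the segment $[0,f(z)]$ (the estimate being automatic once $|f(z)|\ge1/4$, since $r/(1+r)^2<1/4$ for $r<1$).

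Finally, for the difference-quotient estimate I would fix $w$ with $|w|=r$ and apply the growth theorem to the Koebe transform $g$ based at $w$. Evaluating $g$ at $\zeta=\frac{z-w}{1-\overline{w}z}$ identifies $g(\zeta)=\frac{f(z)-f(w)}{(1-|w|^2)f'(w)}$, so the growth bounds on $|g(\zeta)|$, together with $|z-w|=|\zeta|\cdot|1-\overline{w}z|$, the distortion bounds on $|f'(w)|$, and the growth bounds on $|f(z)|$ and $|f(w)|$, yield after algebraic reduction the claimed two-sided estimate for $\bigl|(f(z)-f(w))/(z-w)\bigr|$ when $|z|=|w|=r$.

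I expect the main obstacle to be the lower bounds. The lower growth estimate genuinely needs the Koebe quarter theorem (itself a consequence of $|a_2|\le2$) together with a careful path argument rather than a direct radial integration; and in the difference-quotient estimate the bookkeeping needed to convert the growth bounds for the transformed function $g$ back into a \emph{symmetric} inequality in $|f(z)|$, $|f(w)|$ and $r$ is the delicate step, since one must simultaneously control $|f'(w)|$, the factor $|1-\overline{w}z|$, and the modulus $|\zeta|$ of the automorphism image.
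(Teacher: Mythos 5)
The paper itself gives no proof of this statement: it is imported wholesale from Duren's book, and its two halves live in very different places there. The growth estimate is Theorem~2.6 (Chapter~2, the elementary theory), while the difference-quotient estimate is Corollary~7 on p.~127, which sits in the chapter on the Grunsky inequalities. Your plan for the first half is exactly the standard Chapter-2 argument (area theorem, $|a_{2}|\le 2$, Koebe transform, distortion theorem, then integration), and apart from one slip it is fine: along the preimage $\gamma$ of the segment $[0,f(z)]$ one bounds $|f(z)|=\int_{\gamma}|f'(\zeta)|\,|d\zeta|$ from below using the lower distortion bound, i.e.\ one integrates $|f'|$, not $1/|f'|$.

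The second half is where you have a genuine gap: the two-sided difference-quotient estimate cannot be recovered ``after algebraic reduction'' from the growth bounds for the Koebe transform together with the distortion and growth bounds for $f$, and the failure already occurs in the diagonal case $z=w$, which the statement explicitly includes. There the claim reads
\[
1-r^{2}\le\Big|\frac{z^{2}f'(z)}{f(z)^{2}}\Big|\le\frac{1}{1-r^{2}}.
\]
The strongest link between $|f'(w)|$ and $|f(w)|$ that your toolkit produces is the sharp bound $\frac{1-r}{r(1+r)}|f(w)|\le|f'(w)|\le\frac{1+r}{r(1-r)}|f(w)|$ (growth theorem applied to the Koebe transform at $\zeta=-w$). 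Now take the values $|f(w)|=\frac{r}{(1+r)^{2}}$ and $|f'(w)|=\frac{1}{1-r^{2}}$: they are consistent with the growth theorem, the distortion theorem and the bound just quoted, yet they violate the claimed upper bound, because $\frac{|f(w)|^{2}}{r^{2}(1-r^{2})}=\frac{1}{(1+r)^{4}(1-r^{2})}<\frac{1}{1-r^{2}}$. Hence no manipulation of these one-point inequalities alone can ever prove the claim; the claim encodes correlations between $|f(w)|$ and $|f'(w)|$ that they do not see. Indeed, expanding $\log\frac{z^{2}f'(z)}{f(z)^{2}}=(a_{3}-a_{2}^{2})z^{2}+\mathcal{O}(z^{3})$ shows that already the infinitesimal version of the diagonal case is the inequality $|a_{3}-a_{2}^{2}|\le1$, which is the first Taylor coefficient of the area theorem applied to $1/f(1/\zeta)$ and is not a consequence of $|a_{2}|\le2$ localized by automorphisms. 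The correct route (the one behind Duren's Corollary~7) is to pass to $g(\zeta):=1/f(1/\zeta)\in\Sigma$, observe that for $\zeta_{1}=1/z$, $\zeta_{2}=1/w$ one has $\frac{g(\zeta_{1})-g(\zeta_{2})}{\zeta_{1}-\zeta_{2}}=\frac{zw}{f(z)f(w)}\cdot\frac{f(z)-f(w)}{z-w}$, and to prove
\[
\Big|\log\frac{g(\zeta_{1})-g(\zeta_{2})}{\zeta_{1}-\zeta_{2}}\Big|\le\log\frac{1}{1-r^{2}}
\]
by expanding the left-hand side in the Grunsky coefficients of $g$ and applying the Grunsky inequality in bilinear form together with Cauchy--Schwarz; exponentiating then gives exactly the claimed estimate. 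So this half of the theorem requires the generalized (polynomial) area theorem and the Grunsky--Goluzin machinery, an ingredient entirely missing from your outline.
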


Using the above theorem, we obtain the following estimate for the logarithmic capacity of the
component $K_{i}$ of the lemniscate of a good rational function $R$ with respect
to the modulus of the corresponding residue $a_{i}$ of $R$ at $p_{i}$, under an injectivity assumption for $R$
on a neighborhood of $K_{i}$.

\begin{theor}\label{anofragmagialogarithmikixoritikotita}
Let $R$ be a $d$-good rational function and suppose that
\[
\{z\in\hat{\IC}:|z|\geq\frac{1}{r}\}\subset R(V_{i})
\]
for some $r>1$,
where $V_{i}$ is a neighborhood of $K_{i}$ and $R$ is injective on $V_{i}$, $i\in\{1,\dots,d\}$.
Then
\[
{\rm{cap}}(K_{i})\leq\frac{r^{6}}{(r^{2}-1)(r-1)^{4}}|a_{i}|.\]
\end{theor}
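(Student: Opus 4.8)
The plan is to transfer the estimate to the unit disc through the univalent branch $P_{i}$ and then reduce the bound on ${\rm{cap}}(K_{i})$ to a diameter estimate that the distortion theorem (Theorem~\ref{ektimisipilikoudiaforon}) controls. First I would record the relevant properties of $P_{i}$. Writing $R(z)=a_{i}/(z-p_{i})+\phi(z)$ with $\phi$ holomorphic near $p_{i}$, the relation $Q_{i}=1/R_{i}$ gives $Q_{i}(p_{i})=0$ and $Q_{i}'(p_{i})=1/a_{i}$, so that $P_{i}=Q_{i}^{-1}$ satisfies $P_{i}(0)=p_{i}$ and $P_{i}'(0)=a_{i}$. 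The hypothesis $\{|z|\ge 1/r\}\subset R(V_{i})$ becomes, after inversion, $Q_{i}(V_{i})\supset\{|u|\le r\}$; hence $P_{i}$ is defined and univalent on $\{|u|<r\}$, and $P_{i}(\overline{\ID})=K_{i}$ because $Q_{i}(K_{i})=1/\{|w|\ge 1\}=\overline{\ID}$.

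Next I would normalize. Setting $g(u):=(P_{i}(ru)-p_{i})/(ra_{i})$ for $u\in\ID$, the function $g$ is univalent on $\ID$ with $g(0)=0$ and $g'(0)=1$, and $K_{i}=p_{i}+ra_{i}\,g(\{|u|\le 1/r\})$. Since logarithmic capacity is invariant under translation and scales linearly under dilation, this yields ${\rm{cap}}(K_{i})=r|a_{i}|\,{\rm{cap}}(E)$, where $E:=g(\{|u|\le 1/r\})$, so it remains to bound ${\rm{cap}}(E)$.

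Because $g$ is a non-constant holomorphic, hence open, map, every point of $g(\{|u|<1/r\})$ is interior to $E$, so the boundary of the continuum $E$ lies in $g(\{|u|=1/r\})$; as the diameter of a compact set is attained at extreme points of its convex hull, the extremal pair has the form $g(u),g(v)$ with $|u|=|v|=1/r$. Applying both parts of Theorem~\ref{ektimisipilikoudiaforon} at radius $1/r$ — the growth bound $|g(u)|\le (1/r)/(1-1/r)^{2}$ and the difference-quotient bound $|g(u)-g(v)|\le |u-v|\,|g(u)g(v)|/\bigl((1/r)^{2}(1-(1/r)^{2})\bigr)$ — together with $|u-v|\le 2/r$, I would obtain ${\rm{diam}}(E)\le 2r^{5}/\bigl((r-1)^{4}(r^{2}-1)\bigr)$. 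Combined with the inequality ${\rm{cap}}(E)\le\tfrac12{\rm{diam}}(E)$ for a continuum, this gives ${\rm{cap}}(E)\le r^{5}/\bigl((r-1)^{4}(r^{2}-1)\bigr)$ and hence ${\rm{cap}}(K_{i})=r|a_{i}|\,{\rm{cap}}(E)\le r^{6}|a_{i}|/\bigl((r^{2}-1)(r-1)^{4}\bigr)$, as required.

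I expect the main obstacle to be the last ingredient, the inequality ${\rm{cap}}(E)\le\tfrac12{\rm{diam}}(E)$ for a continuum $E$, since the cruder bound ${\rm{cap}}(E)\le{\rm{diam}}(E)$ would lose the decisive factor of $2$. I would establish it from the exterior conformal map $\psi(w)=cw+c_{0}+\sum_{n\ge1}c_{n}w^{-n}$ of $\hat{\IC}\setminus E$ onto $\{|w|>1\}$'s image, where $c={\rm{cap}}(E)$: from $\psi(w)-\psi(-w)=2cw+\sum_{n\ \mathrm{odd}}2c_{n}w^{-n}$ one gets that the mean of $|\psi(w)-\psi(-w)|^{2}$ over $|w|=1$ is at least $4c^{2}$, so some pair of boundary points of $E$ is at distance at least $2c$, giving ${\rm{diam}}(E)\ge 2\,{\rm{cap}}(E)$. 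The verification that the diametral pair of $E$ lies on the circle $|u|=1/r$, needed so that the two-point estimate of Theorem~\ref{ektimisipilikoudiaforon} applies with $|z|=|w|=1/r<1$, is a minor secondary point handled by the openness of $g$ as above.
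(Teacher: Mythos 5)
Your proof is correct and reaches the paper's exact constant, but its second half takes a genuinely different route. The first half coincides with the paper's: the same normalization (the paper's $F_{i}$ is your $g$), and the same combination of the growth and distortion bounds of Theorem~\ref{ektimisipilikoudiaforon} at radius $1/r$, yielding $|g(u)-g(v)|\leq |u-v|\,r^{6}/((r^{2}-1)(r-1)^{4})$ on $|u|=|v|=1/r$. The divergence is in how this two-point estimate becomes a capacity bound. The paper defines the Lipschitz constant $M_{i}:=\sup_{z,w\in\partial\ID}|P_{i}(z)-P_{i}(w)|/|z-w|$ and pushes the equilibrium measure $\mu_{i}$ of $K_{i}$ forward under $Q_{i}$ to a probability measure $\nu_{i}$ on $\partial\ID$; comparing energies, $0=I(\overline{\ID})\leq I(\nu_{i})\leq \log M_{i}+I(K_{i})$, so ${\rm{cap}}(K_{i})\leq M_{i}$ --- in effect the general principle that an $L$-Lipschitz map of boundaries increases capacity by at most the factor $L$, with no connectedness needed. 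You instead bound ${\rm{diam}}(E)$ and invoke the isodiametric-type inequality ${\rm{cap}}(E)\leq\frac{1}{2}{\rm{diam}}(E)$ for continua, proved by the classical area-theorem/Parseval trick applied to the odd part of the exterior map. The two routes land on the same number because $|u-v|\leq 2$ on a circle, so ``Lipschitz constant'' and ``half the diameter of the image'' give identical bounds. Your approach requires $E$ to be a continuum with simply connected complement (true here, since $g$ is univalent on a neighbourhood of $\overline{D(0,1/r)}$, making $E$ a closed Jordan domain; in general one would pass to polynomial hulls), but it buys the extra geometric conclusion ${\rm{diam}}(K_{i})\leq 2r^{6}|a_{i}|/((r^{2}-1)(r-1)^{4})$, which is strictly stronger information than the capacity bound. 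One detail to tidy: applying Parseval directly on $|w|=1$ presupposes boundary regularity of the exterior map $\psi$; either note that $\partial E$ is an analytic Jordan curve here (so $\psi$ extends analytically across the unit circle), or run the computation on $|w|=\rho>1$ and let $\rho\to 1$, using that accumulation points of $\psi(w_{k})$ with $|w_{k}|\to 1$ must lie on $\partial E$.
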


\begin{proof}
We note that $P_{i}=Q_{i}^{-1}$ is univalent on $\overline{D(0,r)}$, that $P_{i}(0)=p_{i}$ and that $P'_{i}(0)=a_{i}$.
Let
\[
M_{i}:=\sup_{z,w\in\partial K_{i}}\frac{|z-w|}{|Q_{i}(z)-Q_{i}(w)|}=\sup_{z,w\in\partial\ID}\frac{|P_{i}(z)-P_{i}(w)|}{|z-w|}.
\]
From the assumption for $r$ it follows that the function
\[
F_{i}(z):=\frac{P_{i}(rz)-p_{i}}{ra_{i}}
\]
is univalent in $\ID$ with $F_{i}(0)=0$ and $F'_{i}(0)=1$. From Theorem \ref{ektimisipilikoudiaforon} we have
\[
\Big|\frac{F_{i}(z)-F_{i}(w)}{z-w}\Big|\leq\frac{r^{4}|F_{i}(z)F_{i}(w)|}{r^{2}-1}\leq\frac{r^{6}}{(r^{2}-1)(r-1)^{4}},
\]
which implies that
\[
\frac{|P_{i}(rz)-P_{i}(rw)|}{|rz-rw|}
\leq\frac{r^{6}}{(r^{2}-1)(r-1)^{4}}|a_{i}|
\]
for $|z|=|w|=\frac{1}{r}<1$. Therefore,
\[
M_{i}\leq\frac{r^{6}}{(r^{2}-1)(r-1)^{4}}|a_{i}|.
\]

Let $\mu_{i}$ be the equilibrium measure of $K_{i}$ and consider the measure
\[
\nu_{i}(E):=\mu_{i}(Q_{i}^{-1}(E)),\qquad E\subset\partial\ID.
\]
Then,
\begin{align*}
   0=I(\overline{\ID}) &\leq I(\nu_{i}) \\
   &= \int\int\log\frac{1}{|z-w|}d\nu_{i}(z)d\nu_{i}(w) \\
   &= \int\int\log\frac{1}{|Q_{i}(z)-Q_{i}(w)|}d\mu_{i}(z)d\mu_{i}(w) \\
   &\leq \int\int\log\frac{M_{i}}{|z-w|}d\mu_{i}(z)d\mu_{i}(w) \\
   &=\log M_{i}+ I(K_{i}).
\end{align*}
Therefore,
\[
{\rm{cap}}(K_{i})\leq M_{i}\leq\frac{r^{6}}{(r^{2}-1)(r-1)^{4}}|a_{i}|.\qedhere
\]
\end{proof}

Let $R$ be a good rational function, let $p\in R^{-1}(\ID)$ and consider the rational
function
\[
R_{\epsilon}(z):=R(z)+\frac{\epsilon}{z-p},
\]
having residue $\epsilon>0$ at the
extra pole $p$. As a corollary of Theorem \ref{anofragmagialogarithmikixoritikotita}
we obtain an estimate for the rate of decrease of the logarithmic capacity of the component
of the lemniscate of $R_{\epsilon}$ that contains $p$, as $\epsilon\to 0$.

\begin{cor}\label{asymptotikisimperiforaxoritikotitassinistosas}
Let $R$ be a $d$-good rational function, let $p\in R^{-1}(\ID)$ and let
\[
R_{\epsilon}(z):=R(z)+\frac{\epsilon}{z-p},\qquad \epsilon>0,~ z\in\hat{\IC}.
\]
If $K_{\epsilon}$ is the component of the lemniscate $\{z\in\hat{\IC}:|R_{\epsilon}(z)|\geq 1\}$
of $R_{\epsilon}$ that contains $p$, then
\[
{\rm{cap}}(K_{\epsilon})=\mathcal{O}(\epsilon),\qquad\mbox{as }\, \epsilon\to 0.
\]
\end{cor}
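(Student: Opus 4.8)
The plan is to realise $R_{\epsilon}$, for $\epsilon$ small, as a $(d+1)$-good rational function and to apply Theorem~\ref{anofragmagialogarithmikixoritikotita} to the component $K_{\epsilon}$, which is exactly the component of the lemniscate of $R_{\epsilon}$ surrounding the new simple pole $p$, whose residue is $\epsilon$. First I would record that $R_{\epsilon}$ is $(d+1)$-good for all sufficiently small $\epsilon>0$: it has degree $d+1$ and satisfies $R_{\epsilon}(\infty)=0$, and since $p\in R^{-1}(\ID)$ the term $\epsilon/(z-p)$ forces $|R_{\epsilon}|$ to be large on a small neighbourhood of $p$, producing one new component of $\{|R_{\epsilon}|\geq 1\}$ about $p$, while the remaining $d$ components and the connectedness of $\Omega_{\epsilon}:=R_{\epsilon}^{-1}(\ID)$ persist under this perturbation and the level set $\{|R_{\epsilon}|=1\}$ stays free of critical points. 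Writing $K_{\epsilon}$ for the new component, Theorem~\ref{anofragmagialogarithmikixoritikotita} would then give
\[
{\rm{cap}}(K_{\epsilon})\leq\frac{r^{6}}{(r^{2}-1)(r-1)^{4}}\,\epsilon,
\]
provided its hypothesis holds for some $r>1$. The entire point is to produce a single such $r$ that works uniformly for all small $\epsilon$: if one only had $r=r(\epsilon)\to 1$, the constant $r^{6}/((r^{2}-1)(r-1)^{4})$ would blow up and the $\mathcal{O}(\epsilon)$ conclusion would be lost.

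The key step is a rescaling about $p$. Setting $z=p+\epsilon\zeta$ one has
\[
R_{\epsilon}(p+\epsilon\zeta)=\frac{1}{\zeta}+R(p+\epsilon\zeta),
\]
so, with $c:=R(p)$, these rescaled maps converge, uniformly on compact subsets away from $\zeta=0$ and with matching simple pole at $\zeta=0$, to the M\"obius transformation $S(\zeta):=1/\zeta+c$; equivalently the reciprocals $\zeta\mapsto\zeta/(1+\zeta R(p+\epsilon\zeta))$, which are holomorphic near $\zeta=0$, converge locally uniformly to the M\"obius map $T(\zeta)=\zeta/(1+c\zeta)$ (this is the form that matches the proof of Theorem~\ref{anofragmagialogarithmikixoritikotita}, where the relevant object is the local inverse $P_{\epsilon}=(1/R_{\epsilon})^{-1}$). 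The inverse $S^{-1}(w)=1/(w-c)$ is bounded on $\{w:|w|\geq 1/r\}$ precisely when $|c|<1/r$, i.e.\ when $r<1/|c|$. Since $|c|=|R(p)|<1$, I may fix once and for all an $r$ with $1<r<1/|R(p)|$; for this $r$ the set $S^{-1}(\{|w|\geq 1/r\})$ is a bounded topological disc $\bar W$ containing $0$, and $S$ maps $\bar W$ univalently onto $\{|w|\geq 1/r\}$. Note that the limiting shape $S^{-1}(\{|w|\geq 1\})$ of the rescaled $K_{\epsilon}$ lies strictly inside $\bar W$.

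Finally I would transfer this to $R_{\epsilon}$ by a standard Hurwitz / normal-families argument: since the rescaled maps converge to the injective limit $S$, which maps a fixed bounded region $U\supset\bar W$ univalently onto a neighbourhood of $\{|w|\geq 1/r\}$, for all sufficiently small $\epsilon$ the rescaled map $R_{\epsilon}(p+\epsilon\,\cdot\,)$ is injective on $U$ and its image contains $\{|w|\geq 1/r\}$. Translating back through $z=p+\epsilon\zeta$ yields a neighbourhood $V_{\epsilon}:=p+\epsilon U$ of $K_{\epsilon}$ on which $R_{\epsilon}$ is injective and with $\{z:|z|\geq 1/r\}\subset R_{\epsilon}(V_{\epsilon})$ — exactly the hypothesis of Theorem~\ref{anofragmagialogarithmikixoritikotita} for $R_{\epsilon}$ and $K_{\epsilon}$, with the fixed $r$ chosen above and relevant residue $\epsilon$. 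The theorem then gives ${\rm{cap}}(K_{\epsilon})\leq C\epsilon$ with $C=r^{6}/((r^{2}-1)(r-1)^{4})$ independent of $\epsilon$, that is, ${\rm{cap}}(K_{\epsilon})=\mathcal{O}(\epsilon)$. The main obstacle, and the reason the rescaling is essential, is obtaining the injectivity and covering conditions on a set of fixed size uniformly in $\epsilon$; the convergence to the nondegenerate M\"obius limit $S$, guaranteed by $|R(p)|<1$, is precisely what allows a single $r>1$ to be held fixed as $\epsilon\to 0$.
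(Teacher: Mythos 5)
Your proposal is correct, and although it shares the paper's overall skeleton---show that $R_{\epsilon}$ is $(d+1)$-good for small $\epsilon$, then apply Theorem~\ref{anofragmagialogarithmikixoritikotita} to $K_{\epsilon}$ (whose pole $p$ carries residue $\epsilon$) with an $r>1$ held fixed as $\epsilon\to 0$---it carries out the crucial step, producing that uniform $r$ together with the injectivity and covering hypotheses, by a genuinely different mechanism. The paper works globally: it fixes neighborhoods $V_{i}$ of the $d$ components of the \emph{original} lemniscate with $\{|w|\geq 1/(2r)\}\subset\bigcap_{i}R(V_{i})$, uses the locally uniform convergence $R_{\epsilon}\to R$ on $\IC\setminus\{p\}$ to keep $\{|w|\geq 1/r\}\subset\bigcap_{i}R_{\epsilon}(V_{i})$, and then counts preimages: since $R_{\epsilon}$ is proper of degree $d+1$ and the $V_{i}$ already absorb $d$ preimages of each point of $\{|w|\geq 1/r\}$, at most one preimage remains near $p$, which is exactly the required injectivity of $R_{\epsilon}$ on $R_{\epsilon}^{-1}(\{|w|\geq 1/r\})\cap V_{\epsilon}$; there $r$ is dictated by the geometry of the original components. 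You instead work purely locally at $p$: the rescaling $z=p+\epsilon\zeta$ gives convergence to the M\"obius map $S(\zeta)=1/\zeta+R(p)$, the condition $|R(p)|<1$ lets you fix $r<1/|R(p)|$, and Hurwitz/Rouch\'e transfer injectivity and covering to $R_{\epsilon}(p+\epsilon\,\cdot\,)$ on a fixed region; the other components never enter. The paper's degree count is shorter and avoids rescaling; your argument makes the origin of the $\mathcal{O}(\epsilon)$ rate transparent (the rescaled component converges in shape to $S^{-1}(\{|w|\geq 1\})$, so ${\rm{cap}}(K_{\epsilon})/\epsilon$ in fact converges) and localizes all the analysis at $p$. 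Two details you should tighten, though both are at the level of rigor the paper itself elides: (a) to conclude $K_{\epsilon}\subset p+\epsilon U$, note that injectivity of $S$ and $S^{-1}(\{|w|\geq 1/r\})\subset U$ force $|S|<1/r$ on $\partial U$, hence $|R_{\epsilon}(p+\epsilon\zeta)|<1$ on $\partial U$ for small $\epsilon$, so the component of the rescaled lemniscate through $0$ cannot cross $\partial U$; (b) the $(d+1)$-goodness, which Theorem~\ref{anofragmagialogarithmikixoritikotita} formally requires, can be justified by the criterion invoked in the proof of Theorem~\ref{antiparadigmagiaxoritikotitasinistosas}: all critical values of $R_{\epsilon}$ lie in $\ID$ for small $\epsilon$, since the old critical values perturb within $\ID$ while the two new critical points, at distance comparable to $\sqrt{\epsilon}$ from $p$, have critical values near $R(p)\in\ID$.
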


\begin{proof}
We will denote by $K_{i}$, $i=1,\dots,d$, the components of the lemniscate
\[
K:=\{z\in\hat{\IC}:|R(z)|\geq 1\}
\]
of $R$.
Let $\bar{D}(\infty,r):=\{z\in\hat{\IC}:|z|\geq 1/r\}$, $r>0$. Since $R$ is a $d$-good rational function, there exist $r>1$ and a
neighborhood $V_{i}$ of $K_{i}$, $i=1,\dots,d$, such that
\[
\bar{D}(\infty,2r)\subset\bigcap_{i=1}^{d}R(V_{i}).
\]
Since
\[
|R_{\epsilon}(z)-R(z)|=\frac{\epsilon}{|z-p|},
\]
$R_{\epsilon}$ converges locally uniformly
to $R$ on $\IC\setminus\{p\}$ as $\epsilon\to 0$. Therefore, there exists $\epsilon_{0}>0$ such that,
for every $\epsilon<\epsilon_{0}$,
the rational function $R_{\epsilon}$ is  $(d+1)$-good  and
\[
\bar{D}(\infty,r)\subset\bigcap_{i=1}^{d}R_{\epsilon}(V_{i}).
\]
Since $R_{\epsilon}$ is a proper holomorphic function from $\hat{\IC}$ to $\hat{\IC}$ of degree $(d+1)$, for every $\epsilon<\epsilon_{0}$
there exists a neighborhood $V_{\epsilon}$ of $K_{\epsilon}$ such that
$\bar{D}(\infty,r)\subset R_{\epsilon}(V_{\epsilon})$ and $R_{\epsilon}$ is injective on
$R_{\epsilon}^{-1}(\bar{D}(\infty,r))\cap V_{\epsilon}$. From Theorem \ref{anofragmagialogarithmikixoritikotita}
we obtain that, for every $\epsilon<\epsilon_{0}$,
\[
{\rm{cap}}(K_{\epsilon})\leq\frac{r^{6}}{(r^{2}-1)(r-1)^{4}}\epsilon
\]
and the conclusion follows.
\end{proof}

Based on the previous results, one may ask if, given $d\geq2$, there exists a constant
\mbox{$C(d)>0$} such that ${\rm{cap}}(K_{i})\leq C(d)|a_{i}|,$ for every $d$-good rational function.
In the following theorem we show that the answer is no.

\begin{theor}\label{antiparadigmagiaxoritikotitasinistosas}
Let $a>0$ and $\eta\in(\frac{2}{3},1)$. For $p>1$ define
\[
R_{p}(z):=\frac{a}{z-p}+\frac{p-p^{\eta}}{z-ip}+\frac{p-p^{\eta}}{z+ip}.
\]
Then there exists $p_{0}:=p_{0}(a,\eta)$ such that, for all $p>p_{0}$,
\begin{enumerate}[\rm(i)]
  \item $R_{p}$ is a $3$-good rational function,
  \item the component of the lemniscate $\{z\in\hat{\IC}:|R_{p}(z)|\geq 1\}$ containing $p$ has logarithmic capacity
  at least $ap^{1-\eta}/8$.
\end{enumerate}
\end{theor}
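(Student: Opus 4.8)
The plan is to prove both assertions by an asymptotic analysis as $p\to\infty$, organized around the rescaling $w=z/p$. For part~(i) I would first reduce $3$-goodness to a statement about critical values. Since $R_{p}$ has degree $3$, three simple poles and $R_{p}(\infty)=0$, the set $U:=\hat{\IC}\setminus\overline{\ID}$ is a simply connected topological disc in the target sphere; if $U$ contains no critical value of $R_{p}$, then $R_{p}$ restricts to a degree-$3$ covering $R_{p}^{-1}(U)\to U$, so $R_{p}^{-1}(U)$ splits into three disjoint topological discs, each containing exactly one pole (the three preimages of $\infty\in U$). Passing to closures, and noting that $\partial\ID$ then also carries no critical value, $K=\{|R_{p}|\geq1\}$ becomes three disjoint analytic Jordan domains whose complement $\Omega=R_{p}^{-1}(\ID)$ is connected and contains $\infty$. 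Thus~(i) reduces to showing that all four critical values of $R_{p}$ have modulus strictly less than $1$ for $p$ large.

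To locate the critical points, set $\alpha:=a/p$ and $\beta:=1-p^{\eta-1}$, so that $R_{p}(pw)=\frac{\alpha}{w-1}+\beta\,\Phi(w)$ with $\Phi(w)=\frac{2w}{w^{2}+1}=\frac{1}{w-i}+\frac{1}{w+i}$, and $R_{p}(pw)\to\Phi(w)$ locally uniformly on $\hat{\IC}\setminus\{1,i,-i\}$. A short computation shows the critical points solve $\alpha(w^{2}+1)^{2}+2\beta(w-1)^{3}(w+1)=0$; as $p\to\infty$, three roots collapse onto $w=1$ with $s:=w-1\approx(-\alpha/\beta)^{1/3}$ of size $\sim p^{-1/3}$, and one root tends to $w=-1$. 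Near $w=-1$ the critical value tends to $-\beta$, of modulus $1-p^{\eta-1}+O(1/p)<1$. Near $w=1$, inserting $s^{3}\approx-\alpha/\beta$ into $R_{p}(pw)=\frac{\alpha}{s}+\beta\Phi(1+s)$ and using $\Phi(1)=1$, $\Phi'(1)=0$, $\Phi''(1)=-1$ gives the critical value $v=\beta\bigl(1-\tfrac32 s^{2}+O(s^{3})\bigr)$.

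The delicate point, which I expect to be the main obstacle, is to show $|v|<1$ for the two roots $s=|s|e^{\pm i\pi/3}$ (for the real negative root, $|v|<1$ is immediate). For these, $\operatorname{Re}(s^{2})=-\tfrac12|s|^{2}$, whence $|v|^{2}\approx(1-2p^{\eta-1})(1+\tfrac32|s|^{2})\approx 1-2p^{\eta-1}+\tfrac32|s|^{2}$ with $|s|^{2}\sim a^{2/3}p^{-2/3}$. The outward push $\tfrac32|s|^{2}$ has size $p^{-2/3}$, while the inward pull $2p^{\eta-1}$ produced by the residue correction $-p^{\eta}$ has size $p^{\eta-1}$; since $\eta>2/3>1/3$ we have $p^{\eta-1}\gg p^{-2/3}$, so the pull dominates and $|v|<1$. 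This is precisely why the residues are chosen to be $p-p^{\eta}$ rather than $p$: the factor $\beta=1-p^{\eta-1}$ drags the would-be boundary critical values of $\Phi$ (which lie exactly on $\partial\ID$, since $\Phi(\pm1)=\pm1$) strictly inside $\ID$. Making this rigorous amounts to carrying the expansions far enough to control every error term against $p^{\eta-1}$.

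For part~(ii) I work directly near $p$. Writing $R_{p}(z)=\frac{a}{z-p}+g(z)$ with $g(z)=(p-p^{\eta})\frac{2z}{z^{2}+p^{2}}$ and $c:=g(p)=1-p^{\eta-1}$, the identity $g(z)-c=-(1-p^{\eta-1})\frac{(z-p)^{2}}{2p^{2}+2p(z-p)+(z-p)^{2}}$ gives $|g(z)-c|\leq a^{2}p^{-2\eta}$ on $\overline{D}(p,ap^{1-\eta})$ for $p$ large. Put $\lambda:=1+a^{2}p^{-2\eta}$. Completing the square in $|c(z-p)+a|^{2}\geq\lambda^{2}|z-p|^{2}$ shows that $\{z:|c+\frac{a}{z-p}|\geq\lambda\}$ is exactly the closed disc centred at $p+\frac{ac}{\lambda^{2}-c^{2}}$ of radius $\rho_{\lambda}=\frac{a\lambda}{\lambda^{2}-c^{2}}$; its farthest point from $p$ is at distance $\frac{a}{\lambda-c}\leq ap^{1-\eta}$, so the error bound applies on it, and there $|R_{p}(z)|\geq|c+\frac{a}{z-p}|-|g(z)-c|\geq\lambda-a^{2}p^{-2\eta}=1$. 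Hence this disc is a connected subset of $\{|R_{p}|\geq1\}$ containing $p$, so it lies in the component $K_{1}$ of $p$. Finally $\lambda^{2}-c^{2}=(\lambda-c)(\lambda+c)\leq6p^{\eta-1}$ for $p$ large, so $\rho_{\lambda}\geq\frac{a}{6}p^{1-\eta}\geq\frac{a}{8}p^{1-\eta}$; the monotonicity of logarithmic capacity together with ${\rm{cap}}(\overline{D}(z_{0},\rho))=\rho$ then yields ${\rm{cap}}(K_{1})\geq\rho_{\lambda}\geq ap^{1-\eta}/8$.
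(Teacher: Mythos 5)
Your proposal is correct; it coincides with the paper on part (i) and genuinely departs from it on part (ii). For (i) you follow the same strategy as the paper: reduce $3$-goodness to all four critical values lying in $\ID$, and analyse the same quartic, since your $\alpha(w^{2}+1)^{2}+2\beta(w-1)^{3}(w+1)=0$ is exactly the paper's $(c-p)^{3}(c+p)=-\tfrac{a}{2(p-p^{\eta})}(c^{2}+p^{2})^{2}$ in the rescaled variable $w=z/p$. The differences are minor: the paper obtains the reduction by citing \cite[Lemma 2.1]{Bourque-Younsi}, whereas you rederive it by a covering argument over $\hat{\IC}\setminus\overline{\ID}$ (sound, though the disjointness of the closures of the three discs and the connectedness of $\Omega$ deserve a sentence, e.g.\ via the fact that the complement in $\hat{\IC}$ of finitely many disjoint closed Jordan domains is connected); and your error analysis is finer, since you identify the outward push as $\tfrac32|s|^{2}\sim a^{2/3}p^{-2/3}$, so that your argument needs only $\eta>1/3$, while the paper's cruder $\mathcal{O}(p^{-1/3})$ error term is precisely why its proof requires $\eta>2/3$. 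For (ii) the paper restricts $R_{p}$ to the real ray $z=p+pt$, turns $|R_{p}(p+pt)|\geq1$ into the cubic inequality $pt^{3}+(2p^{\eta}-a)t^{2}+2(p^{\eta}-a)t\leq2a$, and concludes that the component contains the segment $[p,\,p+ap^{1-\eta}/2]$, whose capacity is a quarter of its length, namely $ap^{1-\eta}/8$. You instead freeze the regular part $g$ at its value $c=g(p)$, use that the superlevel set of the M\"obius map $z\mapsto c+a/(z-p)$ at level $\lambda=1+a^{2}p^{-2\eta}$ is an exact closed disc of radius $a\lambda/(\lambda^{2}-c^{2})$, and absorb the error $|g-c|\leq a^{2}p^{-2\eta}$ into the margin $\lambda-1$; I have checked the disc geometry and the estimates, and they hold. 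The paper's computation is more elementary; yours buys slightly more, namely an inscribed disc rather than a segment, hence a lower bound $ap^{1-\eta}/6$ on both the inradius and the capacity of the component, which in particular implies the stated bound $ap^{1-\eta}/8$.
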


\begin{proof}
(i) To show that $\{z\in\hat{\IC}:|R_{p}(z)|\geq 1\}$ has 3 components, it suffices to show that
each critical point $c$ of $R_{p}$ satisfies $|R_{p}(c)|<1$ (see \cite[Lemma 2.1]{Bourque-Younsi}).
We shall show that this is the case for all $p$ large enough.

The critical points of $R_{p}$ are the solutions $c$ of
\[
\frac{a}{(c-p)^{2}}+\frac{p-p^{\eta}}{(c-ip)^{2}}+\frac{p-p^{\eta}}{(c+ip)^{2}}=0.
\]
Simplifying, we obtain
\[
(c-p)^{3}(c+p)=-\frac{a}{2(p-p^{\eta})}(c^{2}+p^{2})^{2}.
\]
This has four roots (as expected), namely
\begin{center}
$c=-p+\mathcal{O}(1)$\,\, and\,\, $c=p-a^{1/3}\omega p^{2/3}+\mathcal{O}(p^{(\eta+1)/3})$\,\,\,\, ($p\to+\infty$),
\end{center}
where $\omega$ runs through the cube roots of unity. For the root near $-p$, we have
\[
R_{p}(c)=p^{\eta-1}-1+\mathcal{O}(p^{-1}),\qquad (p\to+\infty),
\]
and, for the roots near $p$, we have
\[
R_{p}(c)=1-p^{\eta-1}+\mathcal{O}(p^{-1/3}),\qquad (p\to+\infty).
\]
Since $\eta>2/3$, it follows that $|R_{p}(c)|<1$ for all $c$ and all sufficiently large $p$.

(ii) Consider points of the form $p+pt$, where $t\geq 0$. A simple calculation shows that $R_{p}(p+pt)\geq 1$
if and only if
\[
pt^{3}+(2p^{\eta}-a)t^{2}+2(p^{\eta}-a)t\leq 2a.
\]
For $0\leq t\leq a/(2p^{\eta})$, we have
\[
pt^{3}+(2p^{\eta}-a)t^{2}+2(p^{\eta}-a)t\leq a\Big(\frac{p^{1-3\eta}a^{2}}{8}\Big)+a\Big(\frac{a}{2p^{\eta}}\Big)+a,
\]
which is less than $2a$ if $p$ is sufficiently large. We conclude that, if $p$ is sufficiently large, then
$\{z\in\hat{\IC}:|R_{p}(z)|\geq 1\}$ contains the interval $[p,p+ap^{1-\eta}/2]$. Therefore, the logarithmic
capacity of the component of $\{z\in\hat{\IC}:|R_{p}(z)|\geq 1\}$ containing $p$ is at least as large as the capacity of $[p,p+ap^{1-\eta}/2])$, namely $ap^{1-\eta}/8$.
\end{proof}

\begin{acknowledgements}
The authors thank Malik Younsi for interesting discussions on the
subject, for posing Question \ref{malikquestion} and for pointing
out inequality
(\ref{anisotitalemniscatecauchytransrormanalyticcapacity}) and the
article \cite{Anderson-Eiderman}. Also, the authors thank Alexey
Lukashov for pointing out the article \cite{Dubinin-majorization},
and the  referee for drawing their attention to the references
\cite{AKS06,Du94,Le75,So09}.
\end{acknowledgements}

\end{document}